\newtheorem {theorem}{Theorem}[section]
\newtheorem {corollary}[theorem]{Corollary}
\newtheorem {lemma}[theorem]{Lemma}
\newtheorem {example}[theorem]{Example}
\newtheorem {remark}[theorem]{Remark}
\newtheorem {claim}{Claim}
\def\ees{{\accent"5E e}\kern-.385em\raise.2ex\hbox{\char'23}\kern-.08em}
\def\EES{{\accent"5E E}\kern-.5em\raise.8ex\hbox{\char'23 }}
\def\ow{o\kern-.42em\raise.82ex\hbox{
\vrule width .12em height .0ex depth .075ex \kern-0.16em \char'56}\kern-.07em}
\def\OW{O\kern-.460em\raise1.36ex\hbox{
\vrule width .13em height .0ex depth .075ex \kern-0.16em \char'56}\kern-.07em}
\title[Topological invariants of plane curve singularities]{Topological invariants of plane curve singularities: Polar quotients and \L ojasiewicz gradient exponents}
\author[HONG-DUC NGUYEN]{HONG-DUC NGUYEN$^\dag$}
\address{$^{\dag}$Hanoi Institute of Mathematics\newline \indent 18 Hoang Quoc Viet, Hanoi, Vietnam.} 
\email{nhduc82@gmail.com}
\address{$^{\dag}$Basque Center for Applied Mathematics, \newline \indent Alameda de Mazarredo 14, 48009 Bilbao, Bizkaia, Spain.} 
\email{hnguyen@bcamath.org}
\author[TI\EES N-S\OW N PH\d{A}M]{TI\EES N-S\OW N PH\d{A}M$^{\ddag}$}
\address{$^{\ddag}$Department of Mathematics, \newline \indent University of Dalat, \newline \indent 1 Phu Dong Thien Vuong, Dalat, Vietnam}
\email{sonpt@dlu.edu.vn}
\author[PHI-D\~{U}NG HO\`{A}NG]{PHI-D\~{U}NG HO\`{A}NG$^{\S}$}
\address{$^{\S}$Laboratory of Applied Mathematics and Computing - Faculty of Fundamental Sciences,
\newline \indent  Posts and Telecommunications Institute of Technology
\newline \indent 
Km10 Nguyen Trai Rd., Ha Dong District, Hanoi, Vietnam}
\email{dunghp@ptit.edu.vn}
\thanks{$^{\dag}$This author is supported by the National Foundation for Science and Technology Development (NAFOSTED), Grant number 101.04-2017.12, Vietnam, the ERCEA Consolidator Grant 615655 NMST and also by the Basque Government through the BERC 2014-2017 program and by Spanish Ministry of Economy and Competitiveness MINECO: BCAM Severo Ochoa excellence accreditation SEV-2013-0323.}
\thanks{$^{\ddag, \S}$These authors are supported by the National Foundation for Science and Technology Development (NAFOSTED), Grant number 101.04-2016.05, Vietnam.}
\keywords{Plane curve singularity, Polar curve, Polar quotient, \L ojasiewicz exponent, Newton polygon, Newton--Puiseux root, Topological invariant}
\subjclass{32S05, 58K65, 14H20}
\date{ \today}
\begin{document}
\maketitle

\begin{abstract}
In this paper, we study polar quotients and \L ojasiewicz exponents of plane curve singularities, which are {\em not necessarily reduced}. 
We first show that the polar quotients is a topological invariant. 
We next prove that the \L ojasiewicz gradient exponent can be computed in terms of the polar quotients, and so it is also a topological invariant.
As an application, we give effective estimates of the \L ojasiewicz exponents in the gradient and classical inequalities of polynomials in two (real or complex) variables.
\end{abstract}

\section{Introduction}

The polar quotients (called also polar invariants) of isolated hypersurface singularities were introduced by Teissier in the seventies of the last century to study equisingularity problems 
(\cite{Teissier1975, Teissier1976, Teissier1977}). They are, by definition, the quotients of  the contact orders between a hypersurface  and the branches of its generic polar curve. It is proved that the set of polar quotients is an analytic invariance, and in the case of reduced plane curve singularities, it is a topological invariant (see \cite{Teissier1977}). The Milnor number, the \L ojasiewicz gradient exponent and other numerical invariants can be computed in terms of  the polar quotients. Teissier's polar quotients can be easily adapted to non-isolated hypersurface singularities.  However, it seems to be more difficult to obtain similar results in the general case.

One of our main results is to show that, in the case of (not necessarily reduced) plane curve singularities, the polar quotients and the \L ojasiewicz gradient exponent are topological invariants. More precisely, we will show in Section~\ref{Section3} that the set of polar quotients of a plane curve singularity can be interpreted in terms of approximations of its Newton--Puiseux roots (Theorem~\ref{Theorem31}). Then, using a recent result due to Parusi\'nski \cite{Parusinski2008}, we obtain the topological invariance of the set of polar quotients (Theorem~\ref{Theorem34}).

Let $f\in \mathbb{K}\{z_1, \ldots, z_n\}$ with $\mathbb{K}=\mathbb{C}$ or $\mathbb{R}$ be a hypersurface singularity. Taking its value in a small neighbourhood, $f$ can be identified with an analytic function germ $f \colon (\mathbb{K}^n, 0) \to (\mathbb{K}, 0)$. It is well-known (see \cite{Lejeune1974, Lojasiewicz1965}) that there exist positive numbers $c, \ell$ and $\epsilon$ such that the following {\em \L ojasiewicz gradient inequality} holds
\begin{eqnarray*}
\|\nabla f(z)\| &\ge& c\, |f(z)|^\ell \quad \textrm{ for all } \quad \|z\| < \epsilon.
\end{eqnarray*}
The minimum of such exponent $\ell$ is called the {\em \L ojasiewicz gradient exponent} of $f$ (at the origin) and is denoted by $\mathscr{L}(f).$ The number $\mathscr{L}(f)$ is rational belonging to the interval $[0, 1)$ and above inequality holds with any exponent $\ell \ge \mathscr{L}(f)$ for some positive constants $c, \epsilon.$
It will be shown in Section~\ref{Section4} that, if $n=2$, i.e. $f$ is a plane curve singularity, then the  \L ojasiewicz gradient exponent $\mathscr{L}(f)$ is attained along the polar curve of $f$ and so it can be computed in terms of the polar quotients of $f$ (Theorems~\ref{Theorem41}, \ref{Theorem45}, and Corollary~\ref{Corollary43}). In particular, 
the \L ojasiewicz gradient exponent of complex plane curve singularities is a topological invariant (Corollary~\ref{Corollary44}). 

As an application, we give effective estimates for the \L ojasiewicz exponents in the gradient and classical inequalities of polynomials. Namely, if $f$ is a (real or complex) polynomial in two variables of degree $d,$ then (Theorem~\ref{Theorem49})
\begin{eqnarray*}
\mathscr{L}(f) & \le & 1 - \frac{1}{(d - 1)^2 + 1}.
\end{eqnarray*}
From this we derive the following effective version of the {\em classical {\L}ojasiewicz inequality} 
\begin{eqnarray*}
|f(z)| & \ge & c\, \mathrm{dist}(z, f^{-1}(0))^{(d - 1)^2 + 1} \quad \textrm{ for all } \quad \|z\| < \epsilon,
\end{eqnarray*}
where $\mathrm{dist}(z, f^{-1}(0))$ denotes the distance from $z$ to the set $f^{-1}(0)$ (Theorem~\ref{Theorem410}).  We refer the reader to the papers \cite{Acunto2005, Gwozdziewicz1999, Johnson2011, Kollar1999, Pham2012} for recent results concerning the estimation of the {\L}ojasiewicz  exponents for (real) polynomials in higher dimensions.

Our proofs are based on the notion of Newton polygon relative to an arc, which will be recalled in Section~\ref{Section2}.

\section{The Newton polygon relative to an arc} \label{Section2}

The technique of Newton polygons plays an important role in this paper. It is well-known that Newton transformations which arise in a natural way when applying the Newton algorithm provide a useful tool for calculating invariants of singularities. For a complete treatment we refer to \cite{ Brieskorn1986, Casas-Alvero2000, Walker1950, Wall2004}. In this section we recall the notion of Newton polygon relative to an arc due to Kuo and Parusi\'nski \cite{Kuo2000} (see also, \cite{HD08} and \cite{HD10}).

Let $f \colon (\mathbb{K}^2, 0) \to (\mathbb{K}, 0)$ denote an analytic function germ with Taylor expansion:
$$f(x, y) = f_m(x, y) + f_{m + 1}(x, y) +  \cdots,$$
where $f_k$ is a homogeneous polynomial of degree $k,$ and $f_m \not \equiv 0.$ We will assume that $f$ is {\em mini-regular in $x$ of order $m$} in the sense that $f_m(1, 0) \ne 0.$
(This can be achieved by a linear transformation $x' = x, y' = y + cx,$ $c$ a generic constant). Let $\phi$ be an analytic arc in $\mathbb{K}^2$, which is not tangent to the $x$-axis. Then it can be parametrized by 
$$x=c_1t^{n_1} + c_2t^{n_2}+ \cdots \in \mathbb{K}\{t\} \text{ and } y=t^N$$
and therefore can be identified with a {\em Puiseux series}
\begin{eqnarray*}
x = \phi(y) = c_1y^{n_1/N} + c_2y^{n_2/N}+ \cdots \in \mathbb{K}\{y^{1/N}\}
\end{eqnarray*}
with $N \le n_1 < n_2 < \cdots $ being positive integers. Let us apply the change of variables $X := x - \phi(y)$ and $Y := y$ to $f(x, y),$ yielding 
$$F(X, Y) := f(X + \phi(Y), Y) := \sum c_{ij}X^iY^{j/N}.$$
For each  $c_{ij} \ne 0,$ let us plot a dot at $(i, j/N),$ called a {\em Newton dot.} The set of Newton dots is called the {\em Newton diagram.} They generate a convex hull, whose boundary is called the {\em Newton polygon of $f$ relative to $\phi,$}  to be denoted by $\mathbb{P}(f, \phi).$ Note that this is the Newton polygon of $F$ in the usual sense. If $\phi$ is a {\em Newton-Puiseux root} of $f = 0$ (i.e. $f(\phi(y), y)=0$), then there are no Newton dots on $X = 0$, and vice versa. Assume that $\phi$ is not a Newton-Puiseux root of $f$, then the exponents of the series $f(\phi(y), y) = F(0, Y)$ correspond to the Newton dots on the line $X = 0.$ In particular, $\mathrm{ord} f(\phi(y), y) = h_0,$ where $(0, h_0)$ is the lowest Newton dot on $X = 0$. The Newton edges $E_s$ and their associated Newton angles $\theta_s$ are defined in an obvious way as illustrated in the following example.
\begin{example}{\rm
Take $f(x, y) := {x}^{3}-{y}^{4}+{y}^{5}$ and $\phi : x = y^{4/3}.$ 
We have 
$$F(X, Y) := f(X + \phi(Y), Y) = {X}^{3}+3\,{X}^{2}{Y}^{4/3}+3\,X{Y}^{8/3}+{Y}^{5}.$$ 
By definition, the Newton polygon of $f$ relative to $\phi$ has two compact edges $E_1, E_2$ with $\tan \theta_1 = 4/3, \tan \theta_2 = 7/3$ (see Figure~\ref{Figure1}).

\unitlength = .75cm
\begin{figure}
\begin{picture}(7, 8)(-2, -1)
\put (0, 0){\vector(0, 1){6}}
\put (0, 0){\vector(1, 0){6}}

\multiput(0, 1) (.5, 0){12}{\line(1, 0){.1}}
\multiput(0, 2) (.5, 0){12}{\line(1, 0){.1}}
\multiput(0, 3) (.5, 0){12}{\line(1, 0){.1}}
\multiput(0, 4) (.5, 0){12}{\line(1, 0){.1}}
\multiput(0, 5) (.5, 0){12}{\line(1, 0){.1}}

\multiput(1, 0) (0, .5){12}{\line(0, 1){.1}}
\multiput(2, 0) (0, .5){12}{\line(0, 1){.1}}
\multiput(3, 0) (0, .5){12}{\line(0, 1){.1}}
\multiput(4, 0) (0, .5){12}{\line(0, 1){.1}}
\multiput(5, 0) (0, .5){12}{\line(0, 1){.1}}

\multiput(1, 2.67) (-.45, 0){2}{\line(-1, 0){.2}}

\put(3, 0){\circle*{0.2}}
\put(2, 1.3){\circle*{0.2}}
\put(1, 2.67){\circle*{0.2}}
\put(0, 5){\circle*{0.2}}

\put(1.6, 2.2){$E_1$}
\put(.5, 4.2){$E_2$}

\put(2.2, .15){$\theta_1$}
\put(0.3, 2.85){$\theta_2$}

\thicklines
\put(3, 0){\line(-3, 4){2}}
\put(1, 2.67){\line(-2, 5){.91}}

\put(-.75, 4.85){$\ 5$}
\put(-.75, 2.5){$\frac{8}{3}$}
\put(-.75, 1.15){$\frac{4}{3}$}
\put(.85, -1){$1$}
\put(1.85, -1){$2$}
\put(2.85, -1){$3$}

\put(1, -.15){\line(0, 1){.3}}
\put(2, -.15){\line(0, 1){.3}}
\put(-.15, 2.67){\line(1, 0){.3}}
\put(-.15, 1.3){\line(1, 0){.3}}
\end{picture}
\caption{ \ } \label{Figure1}
\end{figure}
}\end{example}

Take any edge $E_s.$ The associated polynomial $\mathcal{E}_s(z)$ is defined to be $\mathcal{E}_s(z) := \mathcal{E}_s(z, 1),$ where
$$\mathcal{E}_s(X, Y) := \sum_{(i, j/N) \in E_s} c_{ij} X^i Y^{j/N}.$$ 
The {\em highest Newton edge}, denoted by $E_H,$ is the compact edge of  the polygon $\mathbb{P}(f, \phi)$ with a vertex being the lowest Newton dot on $X = 0.$ For instance, in the above example, $E_2$ is the highest Newton edge.

Next, we recall the notion of {\em sliding} (see \cite{Kuo2000}). Suppose that $\phi$ is not a root of $f  = 0.$  Consider the Newton polygon $\mathbb{P}(f, \phi).$ Take any nonzero root $c$ of $\mathcal{E}_H(z) = 0,$ the polynomial equation associated to the highest Newton edge $E_H.$ We call
$$\phi_1(y) : x = \phi(y) + cy^{\tan \theta_H}$$
a {\em sliding} of $\phi$ along $f,$ where $\theta_H$ is the angle associated to $E_H.$ A recursive sliding $\phi \to \phi_1 \to \phi_2 \to \cdots$ produces a limit, $\phi_\infty$, which is a root of $f = 0$. The $\phi_\infty$ will be called a {\em final result of sliding $\phi$ along $f$}. Note that,  the $\phi_\infty$ has the form
$$\phi_\infty\colon x = \phi(y) + c y^{\tan \theta_H}+\text{higher order terms},$$
due to the following lemma. 

\begin{lemma} \label{Lemma22}
Let $\phi$ be a Puiseux series, which is not root of $f = 0$ and let $\theta_H$ and $\mathcal{E}_H$ be the angle and polynomial associated to the highest Newton edge $E_H.$ Consider a series 
$$\psi : x = \phi(y) + c y^{\rho} + \textrm{ higher order terms,}$$
where $c \in \mathbb{K}$ and $\rho \in \mathbb{Q}, \rho > 0.$ Then the following statements hold:
\begin{itemize}
\item[(i)] If either $\tan \theta_{H}< \rho$ or $\tan \theta_{H}= \rho$ and $\mathcal{E}_H(c) \ne 0$ then 
$\mathbb{P}(f, \phi) = \mathbb{P}(f, \psi),$ and therefore 
$\mathrm{ord} f(\phi(y), y) = \mathrm{ord} f(\psi(y), y).$

\item[(ii)] If $\tan \theta_{H}= \rho$ and $\mathcal{E}_H(c)  = 0$ then $\mathrm{ord} f(\phi(y), y) < \mathrm{ord} f(\psi(y), y).$
\end{itemize}
\end{lemma}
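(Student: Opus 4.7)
The plan is to analyze the effect on the Newton polygon of the translation $X \mapsto X + (\psi(Y) - \phi(Y))$. Writing $F(X, Y) := f(X + \phi(Y), Y) = \sum c_{ij} X^i Y^{j/N}$ and $\Delta(Y) := \psi(Y) - \phi(Y) = cY^{\rho} + R(Y)$ with $\mathrm{ord}\, R > \rho$, we have $f(X + \psi(Y), Y) = F(X + \Delta(Y), Y)$. Binomial expansion gives
\[
F(X + \Delta(Y), Y) \;=\; \sum_{i, j} c_{ij} \sum_{l=0}^{i} \binom{i}{l} c^l\, X^{i-l} Y^{j/N + l\rho} \;+\; (\textrm{terms coming from } R),
\]
so each Newton dot $(i, j/N)$ of $F$ contributes to dots of $f(X + \psi(Y), Y)$ along the half-line of slope $-\rho$ through $(i, j/N)$ going up-left, together with further contributions at strictly larger $Y$-exponent coming from the $R$-term.

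The essential geometric observation is that, by convexity of $\mathbb{P}(f, \phi)$, the slope of $E_H$ is the steepest among all edges, i.e. $\tan\theta_H = \max_s \tan\theta_s$. Fix a vertex $(p, q)$ of $\mathbb{P}(f, \phi)$. A Newton dot $(i, j/N) \ne (p, q)$ of $F$ can contribute to the coefficient of $f(X + \psi(Y), Y)$ at $(p, q)$ only if $(i, j/N)$ lies on the half-line of slope $-\rho$ through $(p, q)$ going down-right (the $R$-contributions coming from points strictly below that half-line). In subcase (i.a), where $\rho > \tan\theta_H$, this half-line is strictly steeper than every edge of $\mathbb{P}(f, \phi)$ and therefore runs below the polygon away from $(p, q)$; hence it carries no other Newton dots of $F$, the coefficient of $f(X + \psi(Y), Y)$ at $(p, q)$ equals the original $c_{p, qN}$, and every vertex of $\mathbb{P}(f, \phi)$ persists. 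A parallel argument shows that every new dot of $f(X + \psi(Y), Y)$ lies on or above $\mathbb{P}(f, \phi)$; hence $\mathbb{P}(f, \psi) = \mathbb{P}(f, \phi)$, and in particular $\mathrm{ord}\, f(\psi(y), y) = h_0 = \mathrm{ord}\, f(\phi(y), y)$.

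Subcase (i.b), where $\rho = \tan\theta_H$, is identical at every vertex other than $V_0 := (0, h_0)$: at those vertices, the half-line of slope $-\rho$ going down-right is still strictly steeper than the adjacent descending edge, so the previous argument applies unchanged. At $V_0$, however, this half-line coincides with the line containing $E_H$, so the Newton dots of $F$ on it are exactly the dots of $E_H$. Summing the principal contributions, the coefficient of $f(X + \psi(Y), Y)$ at $V_0$ equals $\sum_{(i, j/N) \in E_H} c_{ij}\, c^i = \mathcal{E}_H(c)$, which is nonzero by hypothesis; thus $V_0$ persists and again $\mathbb{P}(f, \psi) = \mathbb{P}(f, \phi)$. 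For part (ii), the same calculation yields coefficient $\mathcal{E}_H(c) = 0$ at $V_0$; moreover, for any $q \le h_0$, a Newton dot of $F$ contributing to position $(0, q)$ of $f(X + \psi(Y), Y)$ would have to lie on or below the line of slope $-\tan\theta_H$ through $V_0$ (strictly below if $q < h_0$), which is ruled out by convexity of $\mathbb{P}(f, \phi)$. Therefore $\mathrm{ord}\, f(\psi(y), y) > h_0 = \mathrm{ord}\, f(\phi(y), y)$.

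The main technical nuisance will be careful bookkeeping of the $R(Y)$-contributions and the verification that no unexpected dot or cancellation disturbs the polygon; since $R$ shifts $Y$-exponents strictly upward, however, the geometric picture is governed entirely by the principal expansion along the line of slope $-\rho$, and the argument reduces cleanly to the convexity considerations above.
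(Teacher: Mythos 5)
Your argument is correct and, unlike the paper, it is self-contained. The paper does not actually prove Lemma~\ref{Lemma22}: it cites \cite{HD08} for the special case $\psi = \phi + c\,y^{\tan\theta_H}$ and asserts that the general statement follows by ``applying the special case (infinitely) many times,'' i.e.\ by an iterative reduction. You instead handle the general perturbation $\Delta(Y) = cY^\rho + R(Y)$ in a single binomial expansion and then argue geometrically. The two key facts you exploit --- that a dot $(i,j/N)$ of $F$ contributes to position $(p,q)$ only if $(i,j/N)$ lies on or strictly below the half-line of slope $-\rho$ through $(p,q)$ going down-right, and that for $\rho \ge \tan\theta_H$ any such half-line starting on or below the polygon stays (weakly) below it, by convexity and the fact that $E_H$ is the steepest edge --- correctly yield persistence of every vertex (with coefficient $c_{p,qN}$, resp.\ $\mathcal{E}_H(c)$ at $(0,h_0)$ in the boundary case), nonappearance of dots strictly below $\mathbb{P}(f,\phi)$, and the drop of $(0,h_0)$ precisely when $\rho=\tan\theta_H$ and $\mathcal{E}_H(c)=0$. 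The direct approach is arguably cleaner than an iteration, since a step-by-step reduction would in any case have to track how $\tan\theta_H$ evolves at each stage.

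Two small points worth tightening if you write this up in full. First, in part (ii) your sentence lumps together ``on or below the $E_H$-line'' and then appeals to convexity; but convexity only rules out the \emph{strictly below} case, while the \emph{on-the-line} dots at height $h_0$ are the $E_H$-dots, whose net contribution vanishes by $\mathcal{E}_H(c)=0$ --- you do say this, but the two mechanisms should be kept visibly separate. Second, the claim that ``every new dot of $f(X+\psi(Y),Y)$ lies on or above $\mathbb{P}(f,\phi)$'' deserves the one-line justification that for a point $(p',q')$ strictly below the polygon, the half-line of slope $-\rho$ from $(p',q')$ going down-right has $P(x)-L(x)$ nondecreasing (strictly increasing if $\rho>\tan\theta_H$) and positive at $x=p'$, hence carries no Newton dots of $F$, and points strictly below it carry none either.
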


\begin{proof}
This is well-known as the Newton-Puiseux algorithm of finding a Newton--Puiseux root of $f = 0$ (cf. \cite{Brieskorn1986, Casas-Alvero2000, Walker1950}). For a detailed proof, we refer to \cite{HD08}. In fact, the special case where $\psi : x = \phi(y) + c y^{\tan \theta_H}$ was proved in \cite[Lemma 2.1]{HD08}. The lemma is then deduced by applying the special case (infinitely) many times.
\end{proof}

\section{Polar quotients} \label{Section3}

Let $f \colon (\mathbb{C}^2, 0) \to (\mathbb{C}, 0)$ be an analytic function germ which is mini-regular in $x.$ After Teissier \cite{Teissier1977}, we define the set of {\em polar quotients} 
$$\mathcal{Q}(f) := \{\mathrm{ord} f(\gamma(y), y) \ | \ \gamma \in \Gamma(f)\}.$$
In this section we will show that the set of polar quotients is a topological invariant. We first give a formula interpreting polar quotients in terms of approximations of Newton--Puiseux roots of $f$. Let $\gamma(y) := \sum_{i} a_i y^{\alpha_i}$ be a Puiseux series. For each positive real number $\rho$, the series $\sum_{\alpha_i < \rho}a_{i}y^{\alpha_i} + gy^\rho,$ where $g$ is a generic constant, is called the {\em $\rho$-approximation} of $\gamma(y)$. For two distinct Puiseux series $\gamma_1(y)$ and $\gamma_2(y)$, the {\em approximation of $\gamma_1(y)$ and $\gamma_2(y)$} is defined to be the $\rho$-approximation series of $\gamma_1(y)$ (and hence of $\gamma_2(y)$), where $\rho := \mathrm{ord}\ (\gamma_1(y) - \gamma_2(y))$ is the {\em contact order} of $\gamma_1(y)$ and $\gamma_2(y)$. 

\begin{theorem}\label{Theorem31}
Let $f \colon (\mathbb{C}^2, 0) \to (\mathbb{C}, 0)$ be an analytic function germ which is mini-regular in $x$ of order $m$ and let $\xi_1, \ldots, \xi_r$ $(r \ge 2)$ be its distinct Newton--Puiseux roots. Then 
$$\mathcal{Q}(f) = \left\{\mathrm{ord}\, f(\xi_{i,j}(y),y) \mid 1 \le i < j  \le r \right\},$$ 
where $\xi_{i,j}$ denotes the approximation of $\xi_{i}$ and $\xi_{j}.$
\end{theorem}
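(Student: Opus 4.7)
The strategy rests on the Newton--Puiseux factorization
$$f(x,y) \ = \ u(x,y)\prod_{k=1}^{r}(x - \xi_k(y))^{m_k},$$
where $u$ is a unit (i.e., $u(0,0) \neq 0$) and $m_k$ is the multiplicity of $\xi_k$. For any Puiseux series $\gamma(y)$ which is not a Newton--Puiseux root of $f$, this yields the key identity
$$\mathrm{ord}\, f(\gamma(y), y) \ = \ \sum_{k=1}^{r} m_k \cdot \mathrm{ord}\bigl(\gamma(y) - \xi_k(y)\bigr),$$
reducing both sides of the theorem to combinatorics of contact orders. Setting $\rho_{ij} := \mathrm{ord}(\xi_i - \xi_j)$ and using the genericity of the coefficient $g$ appearing in $\xi_{i,j}$, a direct computation gives $\mathrm{ord}(\xi_{i,j} - \xi_k) = \min\{\rho_{ij}, \rho_{ik}\}$ (with the convention $\rho_{ii} := +\infty$), hence $\mathrm{ord}\, f(\xi_{i,j}, y) = \sum_k m_k \min\{\rho_{ij}, \rho_{ik}\}$.

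\textbf{Inclusion $\supseteq$.} For each pair $(i,j)$, I would exhibit a branch $\gamma \in \Gamma(f)$ with $\mathrm{ord}\, f(\gamma, y) = \mathrm{ord}\, f(\xi_{i,j}, y)$ by running the sliding algorithm of Lemma~\ref{Lemma22} for $\partial f/\partial x$ starting from $\xi_{i,j}$. Writing
$$\frac{\partial f}{\partial x} \ = \ f \cdot \Bigl( \sum_{k=1}^r \frac{m_k}{x - \xi_k(y)} + \frac{u_x}{u} \Bigr)$$
and substituting $x = \xi_{i,j}(Y) + X$, the highest Newton edge of $\mathbb{P}(\partial f/\partial x, \xi_{i,j})$ has angle exactly $\rho_{ij}$, and its associated polynomial $\mathcal{E}_H(c)$ reduces (up to a unit factor) to $\sum_{k:\rho_{ik}\geq \rho_{ij}} m_k/(c - b_k)$, where $b_k$ is the coefficient of $y^{\rho_{ij}}$ in $\xi_k - \xi_{i,j}$. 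This rational equation in $c$ has at least one nonzero root $c^{\ast}$, and successive sliding from $\xi_{i,j} + c^{\ast} y^{\rho_{ij}}$ along $\partial f/\partial x$ produces an actual polar branch $\gamma \in \Gamma(f)$. A term-by-term comparison, using the fact that each slide happens at order strictly greater than $\rho_{ij}$, shows that $\mathrm{ord}(\gamma - \xi_k) = \min\{\rho_{ij}, \rho_{ik}\}$ for each $k$, whence $\mathrm{ord}\, f(\gamma, y) = \mathrm{ord}\, f(\xi_{i,j}, y)$ by the key identity.

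\textbf{Inclusion $\subseteq$ and main obstacle.} Conversely, let $\gamma \in \Gamma(f)$ and choose a root $\xi_i$ of $f$ maximising $\rho := \mathrm{ord}(\gamma - \xi_i)$. The crux — and the step I expect to be the main technical obstacle — is to show that $\rho_{ij} = \rho$ for some $j \neq i$. The polar condition $(\partial f/\partial x)(\gamma, y) = 0$ combined with the logarithmic derivative identity $\sum_k m_k/(\gamma - \xi_k) = -u_x(\gamma,y)/u(\gamma,y)$ forces the leading terms at order $-\rho$ on the left-hand side to cancel: only the indices $k$ with $\mathrm{ord}(\gamma - \xi_k) = \rho$ contribute at this order, and if all such $\xi_k$ shared with $\gamma - \xi_i$ the same leading coefficient $a_i$ at $y^{\rho}$, the sum would reduce to a nonzero multiple of $\sum_k m_k / a_i$. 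Hence some $j$ must have a distinct leading coefficient, and since $\gamma - \xi_i$ and $\gamma - \xi_j$ both start at order $\rho$, the difference $\xi_i - \xi_j$ starts at order exactly $\rho$, giving $\rho_{ij} = \rho$. A case-by-case comparison on whether $\rho_{ik}$ is less than, equal to, or greater than $\rho = \rho_{ij}$, together with the maximality of $\rho$ to exclude accidental cancellations, then yields $\mathrm{ord}(\gamma - \xi_k) = \mathrm{ord}(\xi_{i,j} - \xi_k) = \min\{\rho_{ij}, \rho_{ik}\}$ for every $k$, and the equality $\mathrm{ord}\, f(\gamma, y) = \mathrm{ord}\, f(\xi_{i,j}, y)$ follows once more from the key identity.
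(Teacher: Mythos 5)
Your proposal is correct, and while the $\supseteq$ half follows essentially the same route as the paper, the $\subseteq$ half is genuinely different.

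For $\supseteq$: both you and the paper start from $\phi = \xi_{i,j}$, determine that the highest edge of $\mathbb{P}(\partial f/\partial x, \phi)$ has angle $\rho_{ij}$ with associated polynomial $\mathcal{E}_H'(z)$, locate a root $c^\ast$ of $\mathcal{E}_H'$ that is not a root of $\mathcal{E}_H$, and slide. The paper obtains this purely combinatorially from $\mathbb{P}(f, \phi)$ (moving dots from $X=i$ to $X=i-1$); you obtain it by expanding the logarithmic derivative $\sum_k m_k/(x - \xi_k) + u_x/u$, which gives $\mathcal{E}_H'/\mathcal{E}_H$ directly as $\sum_{k:\rho_{ik}\ge\rho_{ij}} m_k/(z - b_k)$. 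Same place, different bookkeeping.

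For $\subseteq$: here the arguments really diverge. Given $\gamma \in \Gamma(f)$, the paper looks at $\mathbb{P}(f, \gamma)$, observes that $\mathcal{E}_H$ has degree $\ge 2$ with $\mathcal{E}_H(0)\ne 0$ and $\mathcal{E}_H'(0)=0$ (there are no dots on $X=1$), deduces two distinct nonzero roots $c_1,c_2$, and slides $\gamma$ \emph{along $f$} in each direction to manufacture two Newton--Puiseux roots of $f$ whose contact order is $\tan\theta_H$; then Lemma~\ref{Lemma22} gives $\mathrm{ord}\,f(\gamma,y)=\mathrm{ord}\,f(\gamma_{1,2},y)$. You instead pick $\xi_i$ maximising $\rho = \mathrm{ord}(\gamma - \xi_i)$ and use the vanishing of the leading-order coefficient in $\sum_k m_k/(\gamma-\xi_k) = -u_x/u$ (which has non-negative order, while each summand has order $-\rho<0$) to force a second index $j$ with $\mathrm{ord}(\gamma-\xi_j)=\rho$ but a different leading coefficient, whence $\rho_{ij}=\rho$. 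Both routes are sound: your cancellation argument at order $-\rho$ is exactly the statement that $\sum_{k:\mathrm{ord}(\gamma-\xi_k)=\rho} m_k/a_k = 0$, which cannot hold if all $a_k$ agree. (And your reduction of $\mathrm{ord}\,f(\gamma,y)$ and $\mathrm{ord}\,f(\xi_{i,j},y)$ to $\sum_k m_k\min\{\rho_{ij},\rho_{ik}\}$ is valid, since $\rho\ge 1>0$ and the maximality of $\rho$ rules out cancellation when comparing $\gamma-\xi_k$ with $\xi_i - \xi_k$.)

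What each buys: the paper's $\subseteq$ stays uniformly inside the Newton-polygon/sliding machinery, and in fact constructs the two witnessing roots explicitly; your version makes the factorization and the identity $\mathrm{ord}\,f(\gamma,y)=\sum_k m_k\,\mathrm{ord}(\gamma-\xi_k)$ the organising principle, which (a) directly yields the contact data used later in Theorem~\ref{Theorem34}, and (b) proves the content of the paper's Corollary~3.2 in passing (you establish $\mathrm{ord}(\gamma-\xi_k)=\mathrm{ord}(\xi_{i,j}-\xi_k)$ for \emph{all} $k$, not merely the equality of $\mathrm{ord}\,f$). One small caveat worth spelling out if you write this up: in the $\supseteq$ step, you should note that $\xi_{i,j}$ (for generic $g$) is not itself a root of $\partial f/\partial x$, so the sliding is non-vacuous, and that $c^\ast$ avoids the poles $b_k$ so $\mathcal{E}_H(c^\ast)\ne 0$ and the final result of sliding is indeed in $\Gamma(f)$.
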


\begin{proof}
Take any $\gamma \in \Gamma(f)$ and consider the Newton polygon $\mathbb{P}(f, \gamma)$ of $f$ relative to $\gamma.$ Note that $(m, 0)$ is a vertex of the Newton polygon and there is a dot on the line $X = 0$ but there are not dots on the line $X = 1.$ 
Let $E_H$ and $\mathcal{E}_H$ be the highest edge and the corresponding associated polynomial.
We have
$$\deg \mathcal{E}_H \geq 2, \quad \mathcal{E}_H(0) \ne 0, \quad \textrm{ and } \quad \frac{d}{dz } \mathcal{E}_H(0) = 0.$$
It implies that the equation $\mathcal{E}_H(z) = 0$ has at least two non-zero distinct roots, say $c_1, c_2.$ Let $\gamma_{i, \infty}, i = 1, 2$ be a final result of sliding of the arc $y \mapsto \gamma(y) + c_i y^{\tan \theta_H}$ along $f,$ where $\theta_H$ is the angle corresponding  to the highest edge $E_H.$ 
We have $\mathrm{ord} (\gamma_{1, \infty}(y) - \gamma_{2, \infty}(y)) = \tan \theta_H$ and
$$f(\gamma_{i, \infty}(y), y) \equiv 0 \quad \textrm{ for } \quad i = 1, 2.$$
Let $\gamma_{1, 2}$ be the approximation of $\gamma_{1, \infty}$ and $\gamma_{2, \infty}.$
It follows from Lemma \ref{Lemma22} that
$\mathrm{ord} f(\gamma(y), y)  = \mathrm{ord} f(\gamma_{1, 2}(y), y),$ and hence
$$\mathcal{Q}(f) \subset \left\{\mathrm{ord} f(\xi_{i,j}(y),y) \mid 1 \le i < j  \le r \right\}.$$ 

To show the inverse inclusion, we take any pair of distinct roots $\xi_1, \xi_2$ of $f$ and let  $\xi_{1,2}$ be the approximation of $\xi_{1}$ and $\xi_{2}$. Then we may write 
$$\xi_{1,2}(y)=\xi_{1}(y) +g y^{\rho}+\text{higher order terms}$$
where $g$ is generic, and $\rho$ denotes the contact order of $\xi_{1}$ and $\xi_{2}$. Write
$$f(X + \xi_1(Y), Y) = \sum c_{ij} X^i Y^{j/N}.$$

Let $\Delta$ be the (non empty) set of Newton dots in the Newton polygon $\mathbb{P}(f, \xi_1)$ of $f$ relative to $\xi_1,$
where the linear function $(i, j) \mapsto \rho i + j/N$ defined on $\mathbb{P}(f, \xi_1)$ takes its minimal value, say $h_0.$ We denote by $(i_1,j_1/N)$ the lowest point of $\Delta$, i.e. the point in $\Delta$ with maximal $i_1.$ Since $\xi_1$ is a root of $f = 0,$ there are no dots on the line $X = 0,$ and so $i_1 \ge 1.$

Let us denote $\phi(y):=\xi_{1}(y) +g y^{\rho}$ and
$$F(X, Y) :=  f(X + \phi(Y), Y) =\sum c_{ij} (X + g Y^\rho)^i Y^{j/N}= \sum a_{ij} X^i Y^{j/N}.$$
Note that, all the Newton dots of the Newton polygon $\mathbb{P}(f, \phi)$ of $f$ relative to $\phi$ must have the form $(k, \rho(i - k) + j/N)$ with $c_{i j}\neq 0$ and $k = 0, 1, \ldots, i$, and $(i_1,j_1/N)$ is a Newton dot of $\mathbb{P}(f, \phi)$ because $a_{i_1 j_1} = c_{i_1 j_1}\neq 0.$ Since $g$ is generic, the point $(0,h_0)$ is also a Newton dot of $\mathbb{P}(f, \phi)$ (in fact, we take $g$ satisfying $\sum_{(i, j/N)\in \Delta} c_{ij}g^i\neq 0$). Furthermore, all the Newton dots of $\mathbb{P}(f, \phi)$ lie on or above the line containing the two dots $(0,h_0)$ and $(i_1,j_1/N)$. This shows that the edge $E_H$ connecting $(0,h_0)$ and $(i_1,j_1/N)$ is the highest Newton edge of $\mathbb{P}(f, \phi).$ Let $\theta_H$ and $\mathcal{E}_H(z)$ be the angle and polynomial associated with the highest Newton edge $E_H.$ We have (see Figure~\ref{Figure2})
\begin{eqnarray*}
\tan \theta_H &=& \frac{h_0 - j_1/N}{i_1} \ = \ \frac{\rho i_1}{i_1} \ = \ \rho, \\
\mathcal{E}_H(z) &=& \sum_{(i, j/N)\in E_H} a_{i j}z^i \ = \ \sum_{(i, j/N) \in \Delta} c_{i j}(z + g)^i.
\end{eqnarray*}
By the definition of $\phi$, we may write 
\begin{eqnarray*}
\xi_1(y) &=& \phi(y) + a_1 y^\rho + \textrm{ higher order terms,} \\
\xi_2(y) &=&  \phi(y) + a_2 y^\rho + \textrm{ higher order terms}
\end{eqnarray*}
with $a_1\neq a_2.$ It follows from Lemma \ref{Lemma22} that $a_1$ and $a_2$ are roots of the polynomial $\mathcal{E}_H(z).$ In particular, we have $\deg \mathcal{E}_H(z) \geq 2.$

\unitlength = .75cm
\begin{figure}
\begin{picture}(7, 15)(-2, -1.5)
\put (0, 0){\vector(0, 1){13}}
\put (0, 0){\vector(1, 0){7}}

\put (2.35, 5){\vector(3, 1){2}}

\multiput(4.32, -1) (-.25, .75){5}{\line(-1, 3){.16}}
\multiput(2, 6) (-.25, .75){9}{\line(-1, 3){.16}}
\multiput(3, 3) (-.45, 0){7}{\line(-1, 0){.2}}

\thicklines
\put(5, 0){\line(-1, 1){1}}
\put(4, 1){\line(-1, 2){1}}
\put(3, 3){\line(-1, 3){1}}
\put(2, 6){\line(-1, 5){1}}

\put(5, 0){\circle*{0.2}}
\put(4, 1){\circle*{0.2}}
\put(3, 3){\circle*{0.2}}
\put(2, 6){\circle*{0.2}}
\put(0, 12){\circle*{0.2}}
\put(1, 11){\circle*{0.2}}

\put(-1.75, 0){$(0, 0)$}
\put(5, -1){$(m, 0)$}
\put(-1.75, 12){$(0, h_0)$}
\put(3.5, 2.85){$(i_1, \frac{j_1}{N})$}
\put(2.75, 4.5){$\Delta$}
\put(2.75, 7){$\mathbb{P}(f, \xi_1)$}
\put(2., 3.25){$\theta_H$}
\put(4.5, 5.5){$(\rho, 1)$}
\end{picture}
\caption{ \ } \label{Figure2}
\end{figure}

Consider the Newton diagram of $\frac{\partial f}{\partial x}$ relative to $\phi$ which can be easily found from $\mathbb{P}(f, \phi)$. Namely, move every Newton dot $(i, j/N)$ of $\mathbb{P}(f, \phi)$ to $(i - 1, j/N),$ if $i \ge 1,$ and delete all Newton dots $(0, j/N).$ This is simply because $\frac{\partial}{\partial X}(X^i Y^{j/N}) = i X^{i - 1} Y^{j/N}$. Since $g$ is generic, 
$\frac{d}{dz}\mathcal{E}_H(g) \ne 0$ and so, in the Newton diagram of $f$ to $\phi$ there exists a Newton dot on the line $X = 1.$
Therefore the highest Newton edge of  the Newton polygon $\mathbb{P}(\frac{\partial f}{\partial x}, \phi)$ has the vertices at $(0, h_0 - \tan \theta_H)$ and $(i_1- 1, j_1/N).$ The associated polynomial equation is $\frac{d}{dz} \mathcal{E}_H(z) = 0$. Since $\mathcal{E}_H(z)$ has two distinct roots $a_1,a_2$, it follows by simple calculation (see also the argument in the proof of Theorem \ref{Theorem41}), that there exists a nonzero number $c \in \mathbb{C}$ such that
$$\mathcal{E}_H(c) \ne 0 \quad \textrm{ and } \quad \frac{d}{dz}\mathcal{E}_H (c) = 0.$$
Let $\gamma_\infty$ be a final result of sliding of the arc $\phi$ along $\frac{\partial f}{ \partial x }$. It follows from Lemma \ref{Lemma22} that
$$\mathrm{ord} f(\gamma_\infty(y), y) = \mathrm{ord}f(\phi(y), y).$$ 
Furthermore, since $\mathrm{ord}\left(\xi_{1,2}(y)-\phi(y)\right)>\rho = \tan \theta_H$, applying again Lemma \ref{Lemma22} we get
$$\mathrm{ord} f(\xi_{1,2}(y), y) = \mathrm{ord}f(\phi(y), y).$$
Hence the inverse inclusion holds. The theorem is proved.
\end{proof}

By using the same argument as in the proof of Theorem~\ref{Theorem31}, we obtain

\begin{corollary}
Let $f \colon (\mathbb{C}^2, 0) \to (\mathbb{C}, 0)$ be an analytic function germ and let $\xi_1,\ldots,\xi_r$ $(r \ge 2)$ be its distinct Newton--Puiseux roots. Then for each pair of distinct roots $\xi_i,\xi_j$ there exists a curve $\gamma \in \Gamma(f)$ such that
$$\mathrm{ord} (\gamma-\xi_k)=\mathrm{ord} (\xi_{i,j}-\xi_k) \quad  \textrm{ for } \quad k = 1, \ldots, r,$$ 
where $\xi_{i,j}$ denotes the approximation of $\xi_{i}$ and $\xi_{j}.$
\end{corollary}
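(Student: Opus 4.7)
The plan is to reuse the construction carried out in the second half of the proof of Theorem~\ref{Theorem31} and then track the contact orders of the resulting polar curve with every Newton--Puiseux root $\xi_k$. Fix a pair $\xi_i, \xi_j$ of distinct roots, set $\rho := \mathrm{ord}(\xi_i - \xi_j)$, and write $\xi_{i,j}(y) = \xi_i(y) + g y^\rho + \textrm{higher order terms}$ as in that proof, with $g$ the generic constant entering the definition of the approximation. Setting $\phi(y) := \xi_i(y) + g y^\rho$ and sliding $\phi$ along $\partial f / \partial x$ as in the proof of Theorem~\ref{Theorem31}, we obtain a polar curve $\gamma := \gamma_\infty \in \Gamma(f)$ of the form
$$\gamma(y) \;=\; \xi_i(y) + (g + c) y^\rho + \textrm{higher order terms},$$
where $c$ is the nonzero constant satisfying $\mathcal{E}_H(c) \ne 0$ and $\frac{d}{dz}\mathcal{E}_H(c) = 0$ exhibited in that proof.

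It remains to verify that $\mathrm{ord}(\gamma - \xi_k) = \mathrm{ord}(\xi_{i,j} - \xi_k)$ for every $k \in \{1, \ldots, r\}$, which I would do by a term-by-term comparison. Set $\rho_k := \mathrm{ord}(\xi_i - \xi_k)$ with the convention $\rho_i = +\infty$, and split into three cases according as $\rho_k < \rho$, $\rho_k = \rho$, or $\rho_k > \rho$. In the first case, the leading term of each of $\gamma - \xi_k$ and $\xi_{i,j} - \xi_k$ is inherited from $\xi_i - \xi_k$ and has order $\rho_k$. In the remaining two cases the exponent $\rho$ is the candidate for the order, and one must only check that the corresponding $y^\rho$-coefficients do not vanish; this reduces to the condition that $g$ and $g + c$ both avoid the finite set
$$B := \{0\} \cup \{b_{k,\rho} - b_{i,\rho} \,:\, k \neq i, \ \rho_k = \rho\},$$
where $b_{k,\rho}$ denotes the $y^\rho$-coefficient of $\xi_k$. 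Once this is verified, the common value $\min(\rho_k, \rho)$ is attained in all three cases.

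The main --- and essentially only --- obstacle is to guarantee that a single choice of the generic parameter $g$ makes both $g \notin B$ and $g + c(g) \notin B$. Since $c$ depends algebraically on $g$ (through the roots of $\frac{d}{dz}\mathcal{E}_H(z)$, whose coefficients are polynomial in $g$) and only finitely many values of $g$ are therefore excluded, the desired $g$ exists. With this choice, the same $g$ defines both $\xi_{i,j}$ and $\gamma$, and the case analysis above yields $\mathrm{ord}(\gamma - \xi_k) = \mathrm{ord}(\xi_{i,j} - \xi_k) = \min(\rho_k, \rho)$ for every $k$, which is the claimed identity.
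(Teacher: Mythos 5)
Your construction is the right one and matches the paper's (the paper proves the corollary simply by reusing the argument from Theorem~\ref{Theorem31}), and the case analysis by $\rho_k \lessgtr \rho$ is correct. However, the final genericity step contains a genuine gap. You claim that because $c$ depends algebraically on $g$, ``only finitely many values of $g$ are therefore excluded.'' That inference is not valid: an algebraic dependence could make $g + c(g)$ \emph{constant}, and then the set $\{g : g+c(g)\in B\}$ would be either empty or all of $\mathbb{C}$, not finite. In fact, here it \emph{is} essentially constant. Writing $P(w) := \sum_{(i,j/N)\in\Delta} c_{ij}w^i$ (which does not depend on $g$), one has $\mathcal{E}_H(z) = P(z+g)$ and $\frac{d}{dz}\mathcal{E}_H(z) = P'(z+g)$, so the admissible $c$ are exactly $c = w_0 - g$ with $P'(w_0)=0$, $P(w_0)\ne 0$; hence $g+c = w_0$ ranges over a \emph{fixed} finite set independent of $g$. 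Your algebraicity argument gives no control on whether such $w_0$ lies in $B$.

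The correct reason the construction works is structural, not a dimension count: the set $B$ is contained in the zero set of $P$, while $w_0 = g+c$ is by construction \emph{not} a zero of $P$ (that is precisely the requirement $\mathcal{E}_H(c)\ne 0$). Indeed, $0\in B$ is a root of $P$ because $\xi_i$ is a root of $f=0$, so $\Delta$ contains no dot on $X=0$ and $P(0)=0$; and for each $k\ne i$ with $\rho_k = \rho$, Lemma~\ref{Lemma22} forces the $y^\rho$-coefficient $a_k$ of $\xi_k - \phi$ to be a root of $\mathcal{E}_H$, giving $b_{k,\rho}-b_{i,\rho} = a_k + g \in P^{-1}(0)$. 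So $w_0\notin B$ automatically, no genericity of $g$ beyond what already defines $\xi_{i,j}$ is needed, and the identity $\mathrm{ord}(\gamma-\xi_k)=\mathrm{ord}(\xi_{i,j}-\xi_k)=\min(\rho_k,\rho)$ follows. Replace the ``finitely many excluded $g$'' sentence with this observation and the proof is complete.
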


The above corollary is a sharper version of \cite[Lemma~3.3]{Kuo1977}. Indeed, by letting $k = i$ and $ k= j,$ we get 
$$\mathrm{ord} (\gamma-\xi_i)=\mathrm{ord} (\gamma-\xi_j)=\mathrm{ord} (\xi_{i}-\xi_j).$$ 

Recall that, two continuous function germs $f, g \colon (\mathbb{K}^2, 0) \to (\mathbb{K}, 0)$ are said to be {\em topologically right equivalent}, if there exists a germ of homeomorphisms $h \colon (\mathbb{K}^2, 0) \to (\mathbb{K}^2, 0)$ such that $f = g \circ h.$ In \cite{Kuo1977} Kuo and Lu introduced a tree model of an isolated singularity $f \colon (\mathbb{C}^2, 0) \to (\mathbb{C}, 0).$ This model allows one to visualise the Puiseux pairs of irreducible components of $f = 0$ and the contact orders between them. Kuo and Lu's model can be easily adapted to the nonisolated case by adding the multiplicities of components. 

We need the following result due to Parusi\'nski~\cite[Theorem 0.1 and Remark 0.4]{Parusinski2008}, where the last statement follows directly from the proof therein.

\begin{theorem}\label{Theorem33}
Let $f , g  \colon (\mathbb{C}^2, 0) \to (\mathbb{C}, 0)$ be (not necessarily reduced) analytic function germs. Then the following are equivalent
\begin{itemize}
\item[(i)] $f$ and $g$ are topologically right equivalent.
\item[(ii)] There is a one-to-one correspondence between the irreducible components of the zero sets $f^{-1}(0)$ and $g^{-1}(0)$  that preserves the multiplicities of these components, their Puiseux pairs, and the intersection multiplicities of any pairs of distinct components.
\item[(iii)] The tree models of $f$ and of $g$ coincide.
\item[(iv)] There is a one-to-one correspondence between the distinct Newton--Puiseux roots of $f = 0$ and $g = 0$ that preserves the multiplicities of these roots, and the contact orders of any pairs of distinct roots.
\end{itemize}
\end{theorem}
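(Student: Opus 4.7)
The plan is to establish the equivalences by the cycle (i)$\Rightarrow$(ii)$\Rightarrow$(iii)$\Leftrightarrow$(iv)$\Rightarrow$(i), with the last implication being the deep one. First, I would argue that (ii), (iii) and (iv) are mutually equivalent essentially as combinatorial reformulations of the same data. Via the Newton--Puiseux theorem, each irreducible factor of $f$ of $x$-degree $n$ corresponds to an orbit $\{\xi^{(1)},\ldots,\xi^{(n)}\}$ of $n$ conjugate Newton--Puiseux roots, and the multiplicity of the component equals the multiplicity with which the roots appear. Puiseux pairs of a branch are read off any one representative, and the intersection multiplicity of two distinct irreducible components equals $\sum_{j,k}\mathrm{ord}(\xi^{(j)}-\eta^{(k)})$, i.e.\ the sum of contact orders between their Newton--Puiseux roots. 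The tree model of Kuo--Lu is precisely the graphical encoding of this data (branches with Puiseux pairs, joined according to contact orders, decorated with multiplicities), so (ii), (iii), (iv) express the same information in three different languages.

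Next, (i)$\Rightarrow$(ii) is the easy analytic direction. A germ of homeomorphism $h$ with $f = g\circ h$ sends $f^{-1}(0)$ onto $g^{-1}(0)$ as a germ of embedded plane curve, hence gives an embedded topological equivalence between the underlying reduced curves. The classical Zariski equisingularity theorem then implies that the Puiseux pairs of branches and the pairwise intersection multiplicities are preserved. For the non-reduced refinement, the identity $f = g \circ h$ forces the order of vanishing of $f$ along each branch to agree with the order of vanishing of $g$ along the corresponding branch of $g^{-1}(0)$, i.e.\ the multiplicities match under the induced bijection.

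The hard direction is (iv)$\Rightarrow$(i), and it is the place where the actual content of Parusi\'nski's theorem lies. My plan would be: from the matching data in (iv), construct a common embedded resolution $\pi\colon\tilde X\to(\mathbb{C}^2,0)$ by a sequence of point blow-ups dictated by the shared tree model, so that the dual graphs of the total transforms of $f$ and of $g$, decorated with the multiplicities along each exceptional component and each strict transform, coincide. In any toric chart of $\tilde X$ both $f\circ\pi$ and $g\circ\pi$ are, up to units, monomials with the same exponent vector, which gives a local right equivalence on each chart. The main obstacle is then global: one has to glue these local right equivalences into a single homeomorphism $\tilde h$ of $\tilde X$ that maps exceptional divisor to exceptional divisor and strict transform to strict transform in the prescribed way, and simultaneously satisfies $f\circ\pi=(g\circ\pi)\circ\tilde h$ \emph{as functions}, not merely up to having the same zero set. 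The delicate point, handled by Parusi\'nski, is to control the transverse behaviour of $f$ and $g$ across the exceptional divisor so that $\tilde h$ descends through $\pi$ to a well-defined homeomorphism $h$ of the base realising $f=g\circ h$. Once this is done, the whole cycle closes.
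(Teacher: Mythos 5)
The paper does not prove this theorem: it is quoted from Parusi\'nski \cite[Theorem~0.1 and Remark~0.4]{Parusinski2008}, with the added observation that condition (iv) follows from the proof given there. There is therefore no internal argument to compare against, and what you are attempting is a reconstruction of Parusi\'nski's result.

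Your treatment of (ii)$\Leftrightarrow$(iii)$\Leftrightarrow$(iv) is correct --- it is the standard dictionary between irreducible factors, conjugate orbits of Newton--Puiseux roots, contact orders, Puiseux pairs and intersection multiplicities. For (i)$\Rightarrow$(ii), the assertion that ``$f=g\circ h$ forces the orders of vanishing along corresponding branches to agree'' is stated as if it were immediate; since $h$ is only a homeomorphism this deserves a sentence of justification, for instance: the multiplicity of $f$ along a branch $C$ equals the winding number of $f$ on a small loop in a transverse disk to $C$ at a nearby smooth point, and such winding numbers are manifestly preserved under $f=g\circ h$ because $h$ carries that loop to a loop of the same class around the image branch.

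The direction (iv)$\Rightarrow$(i) is the entire substance of the theorem, and here your proposal stops being a proof. The resolution framework you sketch (common combinatorial resolution from the shared tree, local monomialization, gluing, descent through the blow-down) is a plausible scaffold, but the paragraph ends with ``the delicate point, handled by Parusi\'nski.'' That sentence is a citation, not an argument: you have correctly located the difficulty --- building a homeomorphism $\tilde h$ upstairs with $f\circ\pi=(g\circ\pi)\circ\tilde h$ \emph{as functions}, controlled near the exceptional divisor so that it descends to the base --- but you have not resolved it, so your proposal ultimately has the same content as the paper's own reference. A small further imprecision: at points where a strict transform meets the exceptional divisor, $f\circ\pi$ is a unit times a monomial times the strict-transform equation, not a pure monomial, so ``same exponent vector in each chart'' only holds away from those points and the gluing argument must treat them separately.
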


\begin{theorem}\label{Theorem34}
The set of polar quotients of (not necessarily reduced) complex analytic function germs in two variables is a topological invariant.
\end{theorem}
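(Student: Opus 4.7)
The plan is to exhibit each polar quotient as an explicit function of data already known to be preserved by topological right equivalence, namely multiplicities and pairwise contact orders of distinct Newton--Puiseux roots, and then invoke Parusi\'nski's theorem (Theorem~\ref{Theorem33}).

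First, since any nonzero analytic germ can be made mini-regular in $x$ by a linear change of coordinates, and since such a change is in particular a topological right equivalence, I would reduce to the case where both $f$ and $g$ are mini-regular, so that Theorem~\ref{Theorem31} applies. Let $\xi_{1}, \ldots, \xi_{r}$ be the distinct Newton--Puiseux roots of $f = 0$ with multiplicities $m_{1}, \ldots, m_{r}$. The Newton--Puiseux factorization then gives
$$f(x,y) \ = \ u(x,y) \prod_{k=1}^{r} (x - \xi_{k}(y))^{m_{k}}$$
with $u(0,0) \ne 0$, so for any Puiseux series $\gamma(y)$ with $\gamma(0)=0$ that is not a root of $f = 0$, one obtains
$$\mathrm{ord}\, f(\gamma(y), y) \ = \ \sum_{k=1}^{r} m_{k}\cdot \mathrm{ord}\bigl(\gamma(y) - \xi_{k}(y)\bigr).$$

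Next, specialize this identity to $\gamma = \xi_{i,j}$, the approximation of the pair $\xi_{i}, \xi_{j}$ appearing in Theorem~\ref{Theorem31}. By construction $\xi_{i,j}(y) = \xi_{i}(y) + g\, y^{\rho} + \text{higher order terms}$ where $\rho := \mathrm{ord}(\xi_{i} - \xi_{j})$ and $g$ is a generic constant. A short argument using the ultrametric property of $\mathrm{ord}$, together with the genericity of $g$ to rule out any accidental cancellation at exponent $\rho$, shows that
$$\mathrm{ord}\bigl(\xi_{i,j}(y) - \xi_{k}(y)\bigr) \ = \ \min\bigl(\mathrm{ord}(\xi_{i} - \xi_{k}),\ \rho\bigr) \quad \text{for each } k = 1, \ldots, r,$$
and this quantity is symmetric in the roles of $i$ and $j$. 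Combining with the previous formula and Theorem~\ref{Theorem31} yields
$$\mathcal{Q}(f) \ = \ \Bigl\{\, \sum_{k=1}^{r} m_{k}\, \min\bigl(\mathrm{ord}(\xi_{i} - \xi_{k}),\ \mathrm{ord}(\xi_{i} - \xi_{j})\bigr) \ \Big| \ 1 \le i < j \le r \,\Bigr\},$$
expressing every polar quotient purely in terms of the multiplicities $m_{k}$ and the pairwise contact orders $\mathrm{ord}(\xi_{i} - \xi_{k})$.

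Finally, if $f$ and $g$ are topologically right equivalent, Theorem~\ref{Theorem33}(iv) supplies a bijection between the distinct Newton--Puiseux roots of $f$ and those of $g$ preserving both the multiplicities and the contact orders of each pair. Substituting through the displayed formula gives $\mathcal{Q}(f) = \mathcal{Q}(g)$, completing the proof. The only technical point I expect to require care is the minimum formula for $\mathrm{ord}(\xi_{i,j} - \xi_{k})$, where one must be explicit about what ``generic'' means for $g$ so that no extra cancellation occurs at exponent $\rho$; this, however, amounts to avoiding finitely many bad values of $g$ and is entirely in the spirit of Lemma~\ref{Lemma22}. A minor additional check is that the resulting set $\mathcal{Q}(f)$ does not depend on the particular linear coordinate change used to achieve mini-regularity, but this is already guaranteed by Teissier's classical analytic invariance and is in any case apparent from the intrinsic nature of the right-hand side of the formula above.
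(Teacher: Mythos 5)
Your proposal is correct and follows essentially the same route as the paper: reduce to the mini-regular case, factor $f$ by its Newton--Puiseux roots so that $\mathrm{ord}\,f(\xi_{i,j}(y),y)=\sum_{k}m_{k}\,\mathrm{ord}(\xi_{i,j}-\xi_{k})$, rewrite each term as a minimum of contact orders, and then combine Theorem~\ref{Theorem31} with Parusi\'nski's Theorem~\ref{Theorem33}(iv). The only cosmetic difference is that the paper records the minimum in the manifestly symmetric form $\min\{\mathrm{ord}(\xi_{i}-\xi_{k}),\mathrm{ord}(\xi_{j}-\xi_{k})\}$, whereas you use the ultrametrically equivalent $\min\{\mathrm{ord}(\xi_{i}-\xi_{k}),\rho\}$ and then observe the symmetry separately.
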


\begin{proof}
Let $f \colon (\mathbb{C}^2, 0) \to (\mathbb{C}, 0)$ be an analytic function germ which is mini-regular in $x$ and let $\xi_1,\ldots,\xi_r$ be its distinct Newton--Puiseux roots with multiplicities $m_1,\ldots,m_r$:
$$f(x, y) = u(x, y) \prod_{k = 1}^r (x - \xi_k(y))^{m_k},$$
where $u$ is a unit in $\mathbb{C}\{x,y\}.$ If $r = 1$ then the set of polar quotients of $f$ is empty and there is nothing to prove. So assume that $r \ge 2.$ Denote by $\xi_{i,j}$ the approximation of two distinct roots $\xi_i$ and $\xi_j.$ We have
$$\mathrm{ord}\ f(\xi_{i,j}(y),y)=\sum_{k=1}^{r}m_k\mathrm{ord} (\xi_{i,j}-\xi_k)=\sum_{k=1}^{r}m_k\min\{\mathrm{ord} (\xi_{i}-\xi_k),\mathrm{ord} (\xi_{j}-\xi_k)\}.$$
The theorem follows immediately from Theorems~\ref{Theorem31}~and~\ref{Theorem33}.
\end{proof}

\section{\L ojasiewicz exponents}\label{Section4}

Let $f \colon (\mathbb{K}^n, 0) \to (\mathbb{K}, 0)$ be an analytic function germ. Take any analytic arc $\phi$ parametrized by 
$$\phi(t) = \left(z_1(t), \ldots, z_n(t)\right),$$
where each $z_i(t)$ is a convergent power series, for $|t|$ small. If $f \circ \phi \not \equiv 0,$ then we can define a positive rational number $\ell(\phi)$ by
\begin{eqnarray*}
\| \nabla f (\phi(t)) \| & \simeq & |f(\phi(t))|^{\ell(\phi)},
\end{eqnarray*}
where $A \simeq B$ means that $A/B$ lies between two positive constants. By the Curve Selection Lemma (see \cite{Milnor1968}), it is not hard to show that the {\L}ojasiewicz gradient exponent of $f$ is given by
\begin{eqnarray}\label{Eqn1}
\mathscr{L}(f) &=& \sup_\phi \ell(\phi),
\end{eqnarray}
where the supremum is taken over all analytic curves passing through the origin, which are not contained in the zero locus of $f.$  
As a consequence, considering a generic linear curve, one can see that 
\begin{eqnarray} \label{Eqn3}
\mathscr{L}(f) &\ge& \frac{m - 1}{m},
\end{eqnarray}
where $m := \mathrm{ord}\, f$ stands for the multiplicity of $f$ at the origin.
Moreover, from \eqref{Eqn1} and the inequality $\mathscr{L}(f) < 1,$ it is not hard to see that for any unit $u$ in $\mathbb{K}\{z_1,\ldots, z_n\},$
\begin{eqnarray} \label{Eqn4}
\mathscr{L}(u \cdot f) &=& \mathscr{L}(f).
\end{eqnarray}

In the two next subsections we provide formulas computing the {\L}ojasiewicz gradient exponent of analytic function germs in two real and complex variables. 

\subsection{\L ojasiewicz gradient exponent of complex analytic function germs} \

Let $f \colon (\mathbb{C}^2, 0) \to (\mathbb{C}, 0)$ be an analytic function germ. Assume that $f$ is mini-regular in $x$ of order $m := \mathrm{ord} f$. Recall that, the loci defined by $\frac{\partial f}{ \partial x } = 0$ is called a {\em polar curve.}  Following \cite{Walker1950}, a Newton--Puiseux root of $\frac{\partial f}{ \partial x } = 0$ is called a {\em branch} of the polar curve or simply a {\em polar branch.} We denote by $\Gamma(f)$ the set of polar branches which are not roots of $f = 0.$

\begin{theorem}\label{Theorem41}
With the above notations, the \L ojasiewicz gradient exponent of $f$ is given by
$$\mathscr{L}(f) = 
\begin{cases}
\frac{m - 1}{m} & \textrm{ if } \Gamma(f) = \emptyset, \\
\max\left\{ \ell(\gamma)\mid \gamma\in \Gamma(f)\right\} & \textrm{ otherwise.}
\end{cases}$$
\end{theorem}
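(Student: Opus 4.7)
The plan relies on the variational characterization~\eqref{Eqn1}, $\mathscr{L}(f) = \sup_\phi \ell(\phi)$. For the lower bound, \eqref{Eqn3} already furnishes $\mathscr{L}(f)\ge(m-1)/m$. For any $\gamma\in\Gamma(f)$, since $\partial_x f(\gamma(y),y)\equiv 0$, the chain rule
\[
\frac{d}{dy}f(\gamma(y),y) \;=\; \partial_x f(\gamma(y),y)\gamma'(y) + \partial_y f(\gamma(y),y)
\]
reduces to $(f\circ\gamma)'(y) = \partial_y f(\gamma(y),y)$, giving $\mathrm{ord}\,\partial_y f(\gamma(y),y) = \mathrm{ord}\,f(\gamma(y),y)-1$ and hence $\ell(\gamma) = 1 - 1/\mathrm{ord}\,f(\gamma(y),y)$. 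Therefore $\mathscr{L}(f) \ge \ell(\gamma)$ by~\eqref{Eqn1}.

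For the reverse inequality, I must show $\ell(\phi) \le$ RHS for every arc $\phi$ with $f\circ\phi\not\equiv 0$. Thanks to the mini-regularity, WLOG $\phi$ may be written as a Puiseux series $x=\phi(y)$; set $h_0:=\mathrm{ord}\,f(\phi(y),y)<\infty$. The same chain rule argument applied to $\phi$ in place of $\gamma$ shows that at least one of $\mathrm{ord}\,\partial_x f\circ\phi$, $\mathrm{ord}\,\partial_y f\circ\phi$ is at most $h_0-1$, so $\ell(\phi)\le 1-1/h_0$. When $\Gamma(f)=\emptyset$, a count of Newton--Puiseux roots of $\partial_x f$ forces $f=u(x-\xi_1)^m$ with $u$ a unit; a direct computation with $\rho:=\mathrm{ord}(\phi-\xi_1)$ then yields $\mathrm{ord}\,f\circ\phi=m\rho$, $\mathrm{ord}\,\partial_x f\circ\phi=(m-1)\rho$, and $\mathrm{ord}\,\partial_y f\circ\phi\ge(m-1)\rho$, whence $\ell(\phi)=(m-1)/m$. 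When $\Gamma(f)\ne\emptyset$, the goal reduces to producing $\gamma\in\Gamma(f)$ with $\mathrm{ord}\,f\circ\gamma\ge h_0$; combined with the basic bound, this yields $\ell(\phi)\le 1-1/h_0\le 1-1/\mathrm{ord}\,f\circ\gamma = \ell(\gamma)$.

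To construct such $\gamma$, I plan to adapt the Newton polygon sliding argument from the proof of Theorem~\ref{Theorem31}. On $\mathbb{P}(f,\phi)$, examine the highest edge $E_H$ with angle $\theta_H$ and associated polynomial $\mathcal{E}_H$; when $\mathcal{E}_H$ has at least two distinct roots, the degree-counting argument in the proof of Theorem~\ref{Theorem31} produces a root $c$ of $\mathcal{E}_H'$ with $\mathcal{E}_H(c)\ne 0$. The sliding $\phi\mapsto\phi+cy^{\tan\theta_H}$ then preserves $\mathrm{ord}\,f\circ\phi$ by Lemma~\ref{Lemma22}(i) while advancing along $\partial_x f$; iterating yields a polar branch $\gamma\in\Gamma(f)$ with $\mathrm{ord}\,f\circ\gamma=h_0$. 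The \emph{main obstacle} is the degenerate sub-case when $\mathcal{E}_H$ has only one distinct root $c_0$: here one must first pre-slide $\phi$ along $f$ via $\phi\mapsto\phi+c_0\,y^{\tan\theta_H}$, which strictly raises $h_0$ by Lemma~\ref{Lemma22}(ii), and restart with the new Newton polygon. Verifying that this iteration terminates in finitely many steps---without $\phi$ becoming a Newton--Puiseux root of $f$---will require careful bookkeeping of how $\mathbb{P}(f,\cdot)$ and $\mathbb{P}(\partial_x f,\cdot)$ evolve under the pre-sliding.
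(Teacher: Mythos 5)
Your lower bound and your treatment of the case $\Gamma(f)=\emptyset$ are both correct. For the latter you use root-counting for $\partial_x f$ to force $r=1$, while the paper invokes the Weierstrass division theorem and a first-order ODE; both arguments yield $f=u\,(x-\xi_1)^m$, and yours is arguably more elementary. The chain-rule bound $\ell(\phi)\le 1-1/h_0$ is also correct.

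The genuine gap is in the reduction \emph{``the goal reduces to producing $\gamma\in\Gamma(f)$ with $\mathrm{ord}\,f\circ\gamma\ge h_0$''}. Such a $\gamma$ need not exist: the set $\{\mathrm{ord}\,f\circ\gamma : \gamma\in\Gamma(f)\}$ is a fixed finite set (the polar quotients), whereas $h_0$ can be made arbitrarily large by taking $\phi$ close to a multiple root of $f$. For instance, take $f(x,y)=(x-y^2)^2(x-y^3)$, which is mini-regular in $x$ of order $m=3$. Then $\partial_x f=(x-y^2)(3x-y^2-2y^3)$, so $\Gamma(f)$ consists of the single branch $\gamma(y)=\tfrac13 y^2+\tfrac23 y^3$ with $\mathrm{ord}\,f\circ\gamma=6$. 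Yet for $\phi(y)=y^2+y^{10}$ one has $h_0=\mathrm{ord}\,f\circ\phi=22$. No polar branch has $\mathrm{ord}\,f\circ\gamma\ge 22$, so the reduction breaks down. (Here $\ell(\phi)=12/22=6/11$ and $\ell(\gamma)=5/6$, so the theorem is of course fine; it is your intermediate step that fails.) Worse, your pre-sliding heuristic fails on this very example: on $\mathbb{P}(f,\phi)$ the highest edge joins $(0,22)$ to $(2,2)$ with associated polynomial $\mathcal{E}_H(z)=(1+z)^2$, which has a single root $c_0=-1$; the pre-slide $\phi\mapsto\phi-y^{10}$ lands on $y^2$, a Newton--Puiseux root of $f$, so the iteration dies immediately rather than reaching an element of $\Gamma(f)$.

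What you are missing is the dichotomy the paper exploits in its Claim~3. Let $(0,h_0)$ and $(1,h_1)$ be the lowest Newton dots on $X=0$ and $X=1$ in $\mathbb{P}(f,\phi)$, so that $\ell(\phi)=\min\{(h_0-1)/h_0,\ h_1/h_0\}$. If $(1,h_1)$ lies on the highest edge $E_H$, then convexity of the polygon (together with the vertex at $(m,0)$) forces $h_1/h_0\le(m-1)/m$, so $\ell(\phi)\le(m-1)/m\le\ell(\gamma)$ for \emph{every} $\gamma\in\Gamma(f)$ by your own lower-bound computation --- no sliding needed. If $(1,h_1)$ is \emph{not} on $E_H$, then the highest edge of $\mathbb{P}(\partial_x f,\phi)$ has strictly larger angle than $\theta_H$, so sliding $\phi$ along $\partial_x f$ (not along $f$) produces a polar branch $\gamma$ with $\mathrm{ord}\,f\circ\gamma=h_0$ exactly, by Lemma~\ref{Lemma22}(i). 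Your ``find $\gamma$ with $\mathrm{ord}\,f\circ\gamma\ge h_0$'' succeeds only in this second case; in the first case it cannot succeed, but it also is not needed. Replacing your pre-sliding scheme by this case split closes the gap and sidesteps the termination issue entirely.
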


\begin{proof}
We first consider the case $\Gamma(f) \neq \emptyset.$ By the Weierstrass preparation theorem (see, for instance, \cite{Brieskorn1986, Greuel2006}), there exist $u \in \mathbb{C}\{x, y\}$ 
and $a_i \in \mathbb{C}\{y\}$ such that
$$g(x, y) := u(x, y) \cdot f(x, y) = x^m + a_1(y) x^{m - 1} + \cdots + a_m(y)$$
with $u(0, 0) \ne 0$ and $a_i(0) = 0.$ Due to the division theorem (see, for instance, \cite{Brieskorn1986, Greuel2006}), there exist $\phi \in \mathbb{C}\{y\},$ with $\phi(0) = 0,$ and a polynomial $h\in \mathbb{C}\{y\}[x]$ of degree at most $m-2$ such that 
$$m g(x,y)=(x - \phi(y)) \frac{\partial g}{ \partial x}(x, y)+h(x,y),$$
or, equivalently, 
$$m u(x,y) \cdot f(x,y)=(x - \phi(y)) \left(\frac{\partial u}{ \partial x}(x, y)f(x,y)+u(x,y)\frac{\partial f}{ \partial x}(x, y)\right)+h(x,y).$$
Since $\Gamma(f) = \emptyset,$ it follows that all the $m - 1$ roots of $\frac{\partial f}{ \partial x} = 0$ are also roots (counted with multiplicity) of $h = 0,$ and hence $h \equiv 0$ because $\deg h\leq m - 2.$ Then the differential equation 
$$\frac{\frac{\partial g}{ \partial x}(x, y)}{g(x,y)}=\frac{m}{x - \phi(y)}$$ 
implies that $g$ has the form $c(x - \phi(y))^m$ for some $c \neq 0.$ Consequently, one has
\begin{eqnarray*}
\|\nabla g(x, y) \| &\ge& \left|\frac{\partial g}{ \partial x}(x, y) \right| \ = \ m\, c^{\frac{1}{m}}\, |g(x, y)|^{\frac{m - 1}{m}} \quad \textrm{ for all $(x,y)$ near $(0, 0)$}.
\end{eqnarray*}
This implies that $\mathscr{L}(g) \leq \frac{m-1}{m}$ and that the equality because of \eqref{Eqn3}. Therefore, from \eqref{Eqn4}, we get
$$\mathscr{L}(f) \ = \ \mathscr{L}(g) \ = \ \frac{m - 1}{m}.$$

We now consider the case $\Gamma(f) \neq \emptyset.$ We will prove the inequality 
$$\mathscr{L}(f)\leq \max\left\{ \ell(\gamma)\mid \gamma\in \Gamma(f)\right\}.$$
By \eqref{Eqn1}, this is equivalent to showing that 
$$\ell(\phi)\leq \max\left\{ \ell(\gamma)\mid \gamma\in \Gamma(f)\right\}$$ 
for all analytic curves $\phi$ passing through the origin and not contained in the zero locus of $f$. To this end, we make the following observation. 

\begin{claim}\label{Claim1}
The inequality  
$$\ell(\gamma) \ge \ \frac{m - 1}{m}$$
holds  for all $\gamma \in \Gamma(f)$.
\end{claim}

\begin{proof}
Let $x = \gamma(y)$ be a Newton--Puiseux root of $\frac{\partial f}{ \partial x } = 0$ but not of $f = 0.$
Since $f$ is mini-regular in $x$ of order $m,$ $\frac{\partial f}{\partial x}$ is mini-regular in $x$ of order $m - 1.$ This implies that $\mathrm{ord}\, \gamma(y)\geq 1$ and so  $\mathrm{ord}\, f \left(\gamma(y),y\right) \geq m.$ Note that
\begin{eqnarray*}
\frac{d f (\gamma(y),y )}{ d y }
 = \frac{\partial f}{ \partial y }(\gamma(y),y),
\end{eqnarray*}
and hence $\mathrm{ord}\, f \left(\gamma(y),y\right) = \mathrm{ord}\, \frac{\partial f}{ \partial y } \left(\gamma(y),y\right) +1.$ It yields that
\begin{eqnarray*}
\ell(\gamma ) 
\ = \  \frac{\mathrm{ord}\, \frac{\partial f}{ \partial y }\left(\gamma(y),y\right)}{\mathrm{ord}\, f \left(\gamma(y),y\right)} \ \ge \ 1 - \frac{1}{m}.
\end{eqnarray*}
\end{proof}

Take any analytic arc $\phi$ which is not root of $f = 0.$ It is easy to see that if $\phi$ is tangent to the $x$-axis, then $\ell(\phi)\leq \frac{m - 1}{m}$. We can therefore ignore these arcs. Then the arc $\phi$ may be parametrized by a Puiseux series $x = \phi(y)$ with $\mathrm{ord}\, \phi(y) \geq 1.$ 

Assume that $\frac{\partial f}{ \partial x}(\phi(y), y) \not \equiv 0.$ In the Newton polygon $\mathbb{P}(f, \phi)$ of $f$ relative to $\phi,$ let $(0, h_0)$ and $(1, h_1)$ be the lowest Newton dots on $X = 0$ and $X = 1,$ respectively. Then $\ell(\phi)$ can be computed as follows.
\begin{claim}\label{Claim2} 
We have
$$\ell(\phi) = \min \Big \{\frac{h_0 - 1}{h_0}, \frac{h_1}{h_0} \Big \}.$$
\end{claim}

\begin{proof}
Let
\begin{eqnarray*}
F(X, Y) &:=& f(X + \phi(Y), Y) \\
&=& unit \cdot Y^{h_0} + unit \cdot Y^{h_1}X + \textrm{ terms divisible by } X^2.
\end{eqnarray*}
By the Chain Rule, 
\begin{eqnarray*}
\frac{\partial F}{ \partial X } &=& \frac{\partial f}{ \partial x } \quad \textrm{ and } \quad 
\frac{\partial F}{ \partial Y } \ = \  \phi'(Y) \frac{\partial f}{ \partial x } + \frac{\partial f}{ \partial y }.
\end{eqnarray*}
Since $\mathrm{ord}\ \phi(Y)\geq 1$, it follows that
\begin{eqnarray*}
\Big |\frac{\partial F}{ \partial X } \Big| + \Big|\frac{\partial F}{ \partial Y } \Big| &\simeq& \Big| \frac{\partial f}{ \partial x } \Big| + \Big |\frac{\partial f}{ \partial y } \Big|.
\end{eqnarray*}
Along $X = 0,$ we have
\begin{eqnarray*}
|F| \ \simeq \ |Y|^{h_0}, \quad  \Big |\frac{\partial F}{ \partial X } \Big| \ \simeq \ |Y|^{h_1}, \quad 
\Big |\frac{\partial F}{ \partial Y } \Big| &\simeq& |Y|^{h_0 - 1},
\end{eqnarray*}
whence the result.
\end{proof}
\begin{claim}\label{Claim3} 
Let $\gamma$ denote a final result of sliding of $\phi$ along $\frac{\partial f}{ \partial x }.$ Then
$$\ell(\phi) \le \ell(\gamma).$$
\end{claim}
\begin{proof}
In fact, consider the Newton polygon $\mathbb{P}(f, \phi).$ Let $E_H$ and $\theta_H$ be the highest edge and the corresponding angle. 
Note that $(0, h_0)$ is a vertex of $E_H$ and $(m, 0)$ is a vertex of the polygon. There are two cases to be consider (see Figure~\ref{Figure3}).
\setlength{\unitlength}{0.27cm}
\begin{figure}
\begin{center}
\begin{picture}(20, 25)(0, -5) \label{Fig2}
\linethickness{0.05mm}

\put (0, 0){\vector(0, 1){19}}
\put (0, 0){\vector(1, 0){22}}

\multiput(0, 0)(2, 0){10}{\line(0, 1){.2}}
\multiput(0, 2)(2, 0){10}{\line(0, 1){.2}}
\multiput(0, 4)(2, 0){10}{\line(0, 1){.2}}
\multiput(0, 6)(2, 0){10}{\line(0, 1){.2}}
\multiput(0, 8)(2, 0){10}{\line(0, 1){.2}}
\multiput(0, 10)(2, 0){10}{\line(0, 1){.2}}
\multiput(0, 12)(2, 0){10}{\line(0, 1){.2}}
\multiput(0, 14)(2, 0){10}{\line(0, 1){.2}}
\multiput(0, 16)(2, 0){10}{\line(0, 1){.2}}

\multiput(0, 0)(0, 2){9}{\line(1,0){.2}}
\multiput(2, 0)(0, 2){9}{\line(1,0){.2}}
\multiput(4, 0)(0, 2){9}{\line(1,0){.2}}
\multiput(6, 0)(0, 2){9}{\line(1,0){.2}}
\multiput(8, 0)(0, 2){9}{\line(1,0){.2}}
\multiput(10, 0)(0, 2){9}{\line(1,0){.2}}
\multiput(12, 0)(0, 2){9}{\line(1,0){.2}}
\multiput(14, 0)(0, 2){9}{\line(1,0){.2}}
\multiput(16, 0)(0, 2){9}{\line(1,0){.2}}
\multiput(18, 0)(0, 2){9}{\line(1,0){.2}}

\thicklines
\put(0,16){\line(1,-2){6}}
\put(6,4){\line(3, -2){6}}
\put(0,16){\circle*{0.5}}
\put(6,4){\circle*{0.5}}
\put(2,12){\circle*{0.5}}
\put(12,0){\circle*{0.5}}
\put(-1.4,-2){$0$}
\put(-5.,16){$(0, h_0)$}
\put(-5.,12){$(1, h_1)$}
\put(1.5,-2){$1$}
\put(10,-2){$(m, 0)$}
\put(0,-5){Case 1: $(1, h_1) \in E_H$}
\end{picture}\qquad \qquad \qquad
\begin{picture}(20, 25)(0, -5) \label{Fig2}
\linethickness{0.05mm}

\put (0, 0){\vector(0, 1){19}}
\put (0, 0){\vector(1, 0){22}}

\multiput(0, 0)(2, 0){10}{\line(0, 1){.2}}
\multiput(0, 2)(2, 0){10}{\line(0, 1){.2}}
\multiput(0, 4)(2, 0){10}{\line(0, 1){.2}}
\multiput(0, 6)(2, 0){10}{\line(0, 1){.2}}
\multiput(0, 8)(2, 0){10}{\line(0, 1){.2}}
\multiput(0, 10)(2, 0){10}{\line(0, 1){.2}}
\multiput(0, 12)(2, 0){10}{\line(0, 1){.2}}
\multiput(0, 14)(2, 0){10}{\line(0, 1){.2}}
\multiput(0, 16)(2, 0){10}{\line(0, 1){.2}}

\multiput(0, 0)(0, 2){9}{\line(1,0){.2}}
\multiput(2, 0)(0, 2){9}{\line(1,0){.2}}
\multiput(4, 0)(0, 2){9}{\line(1,0){.2}}
\multiput(6, 0)(0, 2){9}{\line(1,0){.2}}
\multiput(8, 0)(0, 2){9}{\line(1,0){.2}}
\multiput(10, 0)(0, 2){9}{\line(1,0){.2}}
\multiput(12, 0)(0, 2){9}{\line(1,0){.2}}
\multiput(14, 0)(0, 2){9}{\line(1,0){.2}}
\multiput(16, 0)(0, 2){9}{\line(1,0){.2}}
\multiput(18, 0)(0, 2){9}{\line(1,0){.2}}

\thicklines
\put(0,16){\line(1,-3){4}}
\put(4,4){\line(2,-1){4}}
\put(8,2){\line(3,-1){6}}
\put(0,16){\circle*{0.5}}
\put(4,4){\circle*{0.5}}
\put(8,2){\circle*{0.5}}
\put(2,12){\circle*{0.5}}
\put(14,0){\circle*{0.5}}
\put(-1.4,-2){$0$}
\put(-5,16){$(0, h_0)$}
\put(-5,12){$(1, h_1)$}
\put(1.5,-2){$1$}
\put(12,-2){$(m, 0)$}
\put(0,-5){Case 2: $(1, h_1) \not \in E_H$}
\end{picture}
\end{center}
\caption{ \ } \label{Figure3}
\end{figure}

\subsubsection*{Case 1: $(1, h_1) \in E_H$}
We have 
\begin{eqnarray*}
\frac{h_0 - h_1}{1} &=& \tan \theta_H \ \ge \ 1.
\end{eqnarray*}
Hence, $h_0 - 1 \ge h_1.$ By Claim~\ref{Claim2}, we get
\begin{eqnarray*}
\ell(\phi) 
&=& \min \Big \{\frac{h_0 - 1}{h_0}, \frac{h_1}{h_0} \Big \} \ = \  \frac{h_1}{h_0} \ \le \ \frac{m - 1}{m},
\end{eqnarray*}
where the last inequality follows from the assumption that $f$ is mini-regular in $x$ of order $m := \mathrm{ord} f.$

\subsubsection*{Case 2: $(1, h_1) \not \in E_H$}
In this case, $\theta_H <\theta_{H'}$ , where $\theta_{H'}$ denotes the angle corresponding to the highest edge of the Newton polygon of $\frac{\partial f}{ \partial x }$ relative to $\phi$. Since $\gamma$ is a final result of sliding of $\phi$ along $\frac{\partial f}{ \partial x }$, it has the form
$$\gamma(y) = \phi(y) + c y^{\tan \theta_{H'}}+\text{higher order terms},$$
for some nonzero constant $c \in \mathbb{C}.$ Applying Lemma~\ref{Lemma22} one has
$$\mathrm{ord} f(\gamma(y),y)=\mathrm{ord} f(\phi(y),y)=h_0.$$
It hence follows from Claim~\ref{Claim2} that 
\begin{eqnarray*}
\ell(\phi) 
&=& \min \Big \{\frac{h_0 - 1}{h_0}, \frac{h_1}{h_0} \Big \} \ \le \ \frac{h_0 - 1}{h_0} \ = \ \frac{\mathrm{ord} f(\gamma(y),y) - 1}{\mathrm{ord} f(\gamma(y), y)} \ = \ \ell(\gamma).
\end{eqnarray*}

Summing up in both cases we have
\begin{eqnarray*}
\ell(\phi)  &\le& \max \Big \{\frac{m - 1}{m}, \ell(\gamma) \Big \} \ \le \ \ell(\gamma),
\end{eqnarray*}
where the second inequality follows from Claim~\ref{Claim1}. 
\end{proof}

Applying the above claims we obtain that  
$$\mathscr{L}(f) \leq \max\left\{ \ell(\gamma)\mid \gamma\in \Gamma(f)\right\},$$
and hence the equality according to \eqref{Eqn1}. This completes the proof of Theorem~\ref{Theorem41}.
\end{proof}
\begin{example}{\rm
Take $f(x, y) =  1/6\,{x}^{6}+1/4\,{x}^{4}{y}^{4}-1/5\,{x}^{5}y-1/3\,{x}^{3}{y}^{5} \in \mathbb{C}\{x, y\}.$ We have $f$ is mini-regular in $x$ of order $m = 6$ and 
$\frac{\partial f}{ \partial x } = x^2(x - y)(x^2 + y^4).$ By definition, then $\Gamma(f)$ consists three polar branches
$$\gamma_{1} : x = y, \quad \gamma_{2} : x = \sqrt{-1} y^2, \quad  \textrm{ and } \quad \gamma_{3} :  x = - \sqrt{-1} y^2.$$
A simple calculation shows that $\ell(\gamma_{1} ) = \frac{5}{6}$ and $\ell(\gamma_{2}) = \ell(\gamma_{3}) = \frac{10}{11}.$ By Theorem~\ref{Theorem45}, 
$$\mathscr{L}(f) = \max \left \{\frac{5}{6}, \frac{10}{11} \right\} = \frac{10}{11}.$$
}\end{example}

The following result is a direct consequence of Theorems~\ref{Theorem31}~and~\ref{Theorem41}. It gives us an alternative formula to compute the \L ojasiewicz gradient exponent of $f$ in terms of its Newton--Puiseux roots.

\begin{corollary}\label{Corollary43}
Let $f \colon (\mathbb{C}^2, 0) \to (\mathbb{C}, 0)$ be an analytic function germ and let $\xi_1,\ldots,\xi_r$ be its distinct Newton--Puiseux roots. Then
$$\mathscr{L}(f) =
\begin{cases}
\frac{m-1}{m} & \textrm{ if } \ r = 1, \\
\max\left\{1 - \frac{1}{\mathrm{ord}\, f(\xi_{i,j}(y),y)} \mid 1\leq i<j\leq r\right\} & \textrm{ otherwise,}
\end{cases}$$
where $\xi_{i,j}$ denotes the approximation of $\xi_i$ and $\xi_j$.
\end{corollary}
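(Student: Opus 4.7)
The plan is to deduce the corollary directly by combining Theorems~\ref{Theorem31} and~\ref{Theorem41} with a brief chain-rule computation. A generic linear change of variables (which preserves $\mathscr{L}$ and only permutes the Newton--Puiseux roots) places $f$ in mini-regular form of order $m$, so both theorems apply. The argument naturally splits on whether $r = 1$ or $r \geq 2$, matching the case-split in the statement.

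For $r = 1$, the goal is to verify $\Gamma(f) = \emptyset$ so that Theorem~\ref{Theorem41} immediately delivers $\mathscr{L}(f) = (m-1)/m$. Writing $f = u \cdot (x - \xi_1(y))^m$ with $u$ a unit (via Weierstrass preparation together with \eqref{Eqn4}), I would differentiate to get
\begin{equation*}
\frac{\partial f}{\partial x} \ = \ (x - \xi_1(y))^{m-1}\Bigl[\tfrac{\partial u}{\partial x}(x - \xi_1(y)) + m u\Bigr].
\end{equation*}
The bracketed factor equals $m u(0,0) \neq 0$ at the origin and is therefore a unit in $\mathbb{C}\{x, y\}$; hence every Newton--Puiseux root of $\partial f/\partial x = 0$ coincides with $\xi_1$, which is already a root of $f = 0$. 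Thus $\Gamma(f) = \emptyset$, as desired.

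For $r \geq 2$, the key step is to compute $\ell(\gamma)$ for an arbitrary polar branch $\gamma \in \Gamma(f)$. Parametrizing $\gamma$ as the graph $t \mapsto (\gamma(t), t)$ and using that $\partial f/\partial x$ vanishes identically along $\gamma$, the chain rule gives
\begin{equation*}
\frac{d}{dt} f(\gamma(t), t) \ = \ \frac{\partial f}{\partial y}(\gamma(t), t),
\end{equation*}
so $\mathrm{ord}\,\partial_y f(\gamma(t), t) = \mathrm{ord}\, f(\gamma(t), t) - 1$ while $\|\nabla f(\gamma(t), t)\| = |\partial_y f(\gamma(t), t)|$; the definition of $\ell(\gamma)$ then forces $\ell(\gamma) = 1 - 1/\mathrm{ord}\, f(\gamma(t), t)$. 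Because $s \mapsto 1 - 1/s$ is increasing on $(0, \infty)$, maximizing $\ell(\gamma)$ over $\Gamma(f)$ is equivalent to maximizing $\mathrm{ord}\, f(\gamma(y), y)$ over $\mathcal{Q}(f)$. Theorem~\ref{Theorem31} identifies $\mathcal{Q}(f) = \{\mathrm{ord}\, f(\xi_{i,j}(y), y) : 1 \leq i < j \leq r\}$ and Theorem~\ref{Theorem41} identifies $\mathscr{L}(f) = \max_{\gamma \in \Gamma(f)} \ell(\gamma)$; assembling these yields the stated formula. No essentially new idea beyond Theorems~\ref{Theorem31}--\ref{Theorem41} is required, and the only step meriting a short verification is the implication $r = 1 \Rightarrow \Gamma(f) = \emptyset$.
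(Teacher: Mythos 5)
Your proposal is correct and amounts to the details the paper leaves implicit when it calls Corollary~\ref{Corollary43} ``a direct consequence of Theorems~\ref{Theorem31} and~\ref{Theorem41}.'' The $r=1$ argument (Weierstrass preparation, then factoring $\partial f/\partial x = (x-\xi_1)^{m-1}\cdot\mathrm{unit}$) correctly establishes $\Gamma(f)=\emptyset$, and the chain-rule identity $\ell(\gamma) = 1 - 1/\mathrm{ord}\,f(\gamma(y),y)$ for $\gamma\in\Gamma(f)$ is exactly the computation already carried out inside Claim~1 of the proof of Theorem~\ref{Theorem41}. Two minor remarks: (i) a generic linear change of variables does not literally \emph{permute} the Newton--Puiseux roots; it replaces $\xi_i(y)$ by $\xi_i(y)+cy$, which preserves all pairwise contact orders and multiplicities and hence preserves each $\mathrm{ord}\,f(\xi_{i,j}(y),y)$, which is the property you actually need; (ii) to invoke the second branch of Theorem~\ref{Theorem41} when $r\ge 2$ you need $\Gamma(f)\neq\emptyset$ --- this is automatic because Theorem~\ref{Theorem31} identifies $\mathcal{Q}(f)$ with a non-empty set, but it is worth stating explicitly.
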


\begin{corollary}\label{Corollary44}
The \L ojasiewicz gradient exponent of complex analytic function germs in two variables is a topological invariant.
\end{corollary}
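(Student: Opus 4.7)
The plan is to deduce the corollary directly from the formula in Corollary~\ref{Corollary43} by verifying that every ingredient appearing on its right-hand side is already known to be a topological invariant of $f$.

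First I would reduce to the situation where both $f$ and $g$ are mini-regular in $x$. A generic linear change of coordinates of the form $(x,y) \mapsto (x, y + cx)$ achieves mini-regularity, and since such a map is a linear isomorphism of $\mathbb{C}^{2}$ it preserves both the topological right-equivalence class and the Łojasiewicz gradient exponent $\mathscr{L}$ (gradient norms and distances to the origin only change by bounded factors). So we may assume mini-regularity on both sides simultaneously.

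Next I would invoke Theorem~\ref{Theorem33}: a topological right equivalence between $f$ and $g$ provides a bijection between their distinct Newton--Puiseux roots that preserves multiplicities and contact orders. In particular, the number $r$ of distinct roots agrees for $f$ and $g$, and so does the total multiplicity $m = \sum_{k} m_{k} = \mathrm{ord}\, f$. If $r = 1$, the first clause of Corollary~\ref{Corollary43} gives $\mathscr{L}(f) = (m-1)/m = \mathscr{L}(g)$. If $r \geq 2$, Theorem~\ref{Theorem34} yields $\mathcal{Q}(f) = \mathcal{Q}(g)$, and the second clause of Corollary~\ref{Corollary43} writes both exponents as $\max\{1 - 1/q \mid q \in \mathcal{Q}(f)\}$ and $\max\{1 - 1/q \mid q \in \mathcal{Q}(g)\}$ respectively, which are therefore equal.

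The only delicate point is the preliminary reduction to simultaneous mini-regularity: one has to argue that the generic linear changes of coordinates applied to $f$ and to $g$ can be chosen in a way compatible with the given homeomorphism $h$ realizing $f = g \circ h$, and that the resulting conjugation still witnesses topological right equivalence of the mini-regular representatives without disturbing $\mathscr{L}$. Once this routine bookkeeping is dispatched, the corollary is an immediate consequence of Theorems~\ref{Theorem33} and~\ref{Theorem34} combined with Corollary~\ref{Corollary43}.
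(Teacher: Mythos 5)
Your argument is correct and follows essentially the same route as the paper, which proves this corollary in one line by citing Corollary~\ref{Corollary43} together with Theorems~\ref{Theorem31} and~\ref{Theorem34}; you simply spell out what that citation buys, using Theorem~\ref{Theorem33} to note that $r$ and $m=\mathrm{ord}\,f$ are preserved. The mini-regularity reduction you flag as delicate is in fact routine: for generic linear automorphisms $A,B$ with $f\circ A$ and $g\circ B$ mini-regular, $f=g\circ h$ gives $f\circ A=(g\circ B)\circ(B^{-1}\circ h\circ A)$, and $\mathscr{L}$ is unchanged under linear automorphisms.
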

\begin{proof}
This follows immediately from Corollary~\ref{Corollary43}, Theorems~\ref{Theorem31}~and~\ref{Theorem34}.
\end{proof}

\subsection{\L ojasiewicz gradient exponent of real analytic function germs} \

For an real analytic function germ $f \colon (\mathbb{R}^2, 0) \to (\mathbb{R}, 0)$  we have the following version of Theorem~\ref{Theorem41}. Let $x = \gamma(y)$ be a Newton--Puiseux root of $\frac{\partial f}{\partial x} = 0$ in the ring $\mathbb{C}\{x, y\}$: 
\begin{eqnarray*}
\gamma(y) = a_1 y^{n_1/N} + a_2y^{n_2/N}+ \cdots + a_{s - 1} y^{n_{s - 1}/N} + c_s y^{n_s/N} +  \cdots,
\end{eqnarray*}
where $a_i \in \mathbb{R},$ $c_s$ is the first non-real coefficient, if there is one. Let us replace $c_s$ by a generic real number $g,$ and call
\begin{eqnarray*}
\gamma_{\mathbb{R}}(y) := a_1 y^{n_1/N} + a_2y^{n_2/N}+ \cdots + a_{s - 1} y^{n_{s - 1}/N} + g y^{n_s/N},
\end{eqnarray*}
a {\em real polar branch.} In case $s = +\infty,$ let $\gamma_{\mathbb{R}} = \gamma.$ We denote by $\Gamma(f)$ the set of real polar branches of $f$ which are not Newton--Puiseux roots of $f = 0.$ Let
$$\mathscr{L}_{+}(f) := \max \left\{\frac{m-1}{m},\ell(\gamma_{\mathbb{R}}) \mid \gamma_{\mathbb{R}} \in \Gamma(f) \right\}.$$
We also put $\mathscr{L}_{-}(f) := \mathscr{L}_{+}(\bar{f}),$ where $\bar{f}$ denotes the germ defined by $\bar{f}(x, y) := f(x, -y).$

\begin{theorem}\label{Theorem45}
With the above notations, the \L ojasiewicz gradient exponent of $f$ is given by
$$\mathscr{L}(f) = \max \left\{\mathscr{L}_{+}(f),  \mathscr{L}_{-}(f) \right\}.$$
\end{theorem}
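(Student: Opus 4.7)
The plan is to adapt the reasoning of Theorem~\ref{Theorem41} to the real setting. The main new feature is that all Puiseux expansions and sliding procedures must be kept real, and one-sided approaches to the origin ($y \to 0^{+}$ and $y \to 0^{-}$) are treated separately via the change of variable $y \mapsto -y$.

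For the lower bound $\mathscr{L}(f) \ge \max\{\mathscr{L}_{+}(f), \mathscr{L}_{-}(f)\}$, each real polar branch $\gamma_{\mathbb{R}} \in \Gamma(f)$, after reparametrization $y = t^{N}$, defines a real analytic arc through the origin on which $f$ does not vanish identically, so \eqref{Eqn1} gives $\ell(\gamma_{\mathbb{R}}) \le \mathscr{L}(f)$. Together with \eqref{Eqn3} this yields $\mathscr{L}_{+}(f) \le \mathscr{L}(f)$. The bound $\mathscr{L}_{-}(f) \le \mathscr{L}(f)$ follows by applying the same argument to $\bar{f}$, observing that the diffeomorphism $(x, y) \mapsto (x, -y)$ leaves $\mathscr{L}$ invariant.

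For the reverse inequality, by~\eqref{Eqn1} it suffices to bound $\ell(\phi) \le \max\{\mathscr{L}_{+}(f), \mathscr{L}_{-}(f)\}$ for every real analytic arc $\phi$ at the origin not contained in $f^{-1}(0)$. If $\phi$ is tangent to the $x$-axis, the proof of Claim~\ref{Claim2} gives $\ell(\phi) \le (m-1)/m$, already absorbed into both $\mathscr{L}_{\pm}(f)$. Otherwise, depending on the sign of the $y$-coordinate of $\phi$ near the origin, I reduce (by passing to $\bar{f}$ if needed) to the case $y > 0$ and write $\phi$ as a real Puiseux series $x = \phi(y)$ with $\mathrm{ord}\,\phi \ge 1$. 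Claim~\ref{Claim2} then applies verbatim to give $\ell(\phi) = \min\{(h_0-1)/h_0, h_1/h_0\}$, where $(0, h_0)$ and $(1, h_1)$ are the lowest Newton dots on $X=0$ and $X=1$ in $\mathbb{P}(f, \phi)$. In Case~1 where $(1, h_1) \in E_H$, the bound $\ell(\phi) \le (m-1)/m$ is obtained as in the complex proof. In Case~2 where $(1, h_1) \notin E_H$, I slide $\phi$ along $\partial f/\partial x$: since $f$ is real, every associated polynomial $\mathcal{E}_{H'}(z)$ that appears has real coefficients; I continue selecting real roots as long as they exist, and when first forced to choose a non-real root $c_s$ of some associated polynomial, I stop and replace $c_s$ by a generic real number $g$. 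This produces a real Puiseux series $\gamma_{\mathbb{R}}$ which, by construction, is the real polar branch in $\Gamma(f)$ attached to some complex Newton--Puiseux root of $\partial f/\partial x = 0$. Every term added during the sliding, including $g\, y^{n_s/N}$, has order strictly exceeding $\tan\theta_H$; repeated application of Lemma~\ref{Lemma22}(i) then yields $\mathbb{P}(f, \gamma_{\mathbb{R}}) = \mathbb{P}(f, \phi)$, so $\mathrm{ord}\, f(\gamma_{\mathbb{R}}(y), y) = h_0$ and the lowest dot on $X = 1$ in $\mathbb{P}(f, \gamma_{\mathbb{R}})$ remains at height $h_1$. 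Applying Claim~\ref{Claim2} to $\gamma_{\mathbb{R}}$ yields $\ell(\gamma_{\mathbb{R}}) = \ell(\phi)$, and since $\gamma_{\mathbb{R}} \in \Gamma(f)$, this gives $\ell(\phi) \le \mathscr{L}_{+}(f)$.

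The main obstacle is exactly the truncation step in Case~2: one must check that the genericity of $g$ simultaneously guarantees that $g$ is not a root of the relevant associated polynomial (so that Lemma~\ref{Lemma22}(i) applies and the polygon $\mathbb{P}(f, \cdot)$ is preserved) and that $\gamma_{\mathbb{R}}$ is not a Newton--Puiseux root of $f = 0$ (so that $\gamma_{\mathbb{R}}$ genuinely lies in $\Gamma(f)$). Both requirements exclude only finitely many values of $g$; the first is automatic from Lemma~\ref{Lemma22}(i) applied to the last substitution, while the second follows from the identity $\mathrm{ord}\,f(\gamma_{\mathbb{R}}(y), y) = h_0 < \infty$ obtained in the previous step.
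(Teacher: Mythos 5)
Your overall strategy mirrors the paper's, but one step contains a genuine logical error even though your final inequality survives.

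In Case~2 you argue that, because every exponent added during the sliding exceeds $\tan\theta_H$, repeated applications of Lemma~\ref{Lemma22}(i) give $\mathbb{P}(f,\gamma_{\mathbb{R}}) = \mathbb{P}(f,\phi)$, and from this you conclude that \emph{the lowest Newton dot on $X=1$ stays at height $h_1$}, hence $\ell(\gamma_{\mathbb{R}}) = \ell(\phi)$. The first assertion is correct, but the second does not follow: $\mathbb{P}(f,\cdot)$ is the boundary of the convex hull of the Newton diagram, and in Case~2 the dot $(1,h_1)$ lies \emph{strictly above} that boundary (precisely because $(1,h_1)\notin E_H$), so equality of the polygons carries no information about it. In fact, whenever a genuine sliding step along $\partial f/\partial x$ occurs — i.e., whenever you pick a real nonzero root $c$ of the associated polynomial — Lemma~\ref{Lemma22}(ii) applied to $\partial f/\partial x$ forces $\mathrm{ord}\,\frac{\partial f}{\partial x}(\gamma_{\mathbb{R}}(y),y) > h_1$, so the dot on $X=1$ moves strictly up. What you actually obtain is the one-sided estimate $h_1 \le h_1'$ and hence $\ell(\phi)\le \ell(\gamma_{\mathbb{R}})$; this is exactly the paper's argument (Lemma~\ref{Lemma22}(i) applied to $f$ pins $h_0$, Lemma~\ref{Lemma22}(ii) applied to $\partial f/\partial x$ raises $h_1$). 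Since you only ever use the inequality $\ell(\phi)\le\ell(\gamma_{\mathbb{R}})\le\mathscr{L}_{+}(f)$, your conclusion still stands, but the claimed equality and its justification are wrong and must be replaced by the one-sided bound. A smaller remark: the series you build by truncating the sliding of $\phi$ retains all of $\phi$'s own terms of order $\ge n_s/N$, so it is not literally the real polar branch $\gamma_{\mathbb{R}}\in\Gamma(f)$ as defined in the paper; one further application of Lemma~\ref{Lemma22}(i) identifies the $\ell$-values of the two series, so nothing breaks, but this identification should be stated.
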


\begin{proof}
Let $\phi$ be a real curve parametrized by either $(x = x(t), y = t)$ or $(x = x(t), y = -t),$ where $x(t)$ is an element in $\mathbb{R}\{t^{1/N}\}$ for some positive integer number $N.$ 
Also assume that $\phi$ is not root of $f = 0.$ 

We first consider the case $\phi$ has the form $(x(t), t).$ Let us denote by $\gamma$ a final result of sliding of $\phi$ along $\frac{\partial f}{\partial x}$ and by $\gamma_{\mathbb{R}}$ its real approximation. 

\begin{claim}\label{Claim4} 
We have
$$\ell(\phi) \le \max \left \{\frac{m - 1}{m},\ell(\gamma_{\mathbb{R}}) \right\},$$
and therefore $\ell(\phi) \le \mathscr{L}_{+}(f)$.
\end{claim}

\begin{proof}
If $\phi$ is tangent to the $x$-axis, then $\ell(\phi)\leq \frac{m - 1}{m},$ and there is nothing to prove. 
So assume that the arc $\phi$ is parametrized by a Puiseux series $x = \phi(y)$ with $\mathrm{ord}\, \phi(y) \geq 1.$ 

Clearly, we may assume that $\phi$ is not root of $\frac{\partial f}{ \partial x} = 0.$ In the Newton polygon $\mathbb{P}(f, \phi)$ of $f$ relative to $\phi,$ let $(0, h_0)$ and $(1, h_1)$ be the lowest Newton dots on $X = 0$ and $X = 1,$ respectively. By the same argument as in Claim~\ref{Claim2}, the quantity $\ell(\phi)$ can be read off from the Newton polygon $\mathbb{P}(f,\phi)$ as
$$\ell(\phi) = \min \Big \{\frac{h_0 - 1}{h_0}, \frac{h_1}{h_0} \Big \}.$$
We can see moreover that, if $(1,h_1)\in E_H$, then $\ell(\phi)\leq \frac{m - 1}{m}$. It hence suffices to prove the claim for the case that $(1,h_1)\not\in E_H$. In this case, $\theta_H <\theta_{H'},$ where $\theta_{H'}$ denotes the angle corresponding to the highest Newton  edge of the Newton polygon of $\frac{\partial f}{ \partial x }$ relative to $\phi$. Since $\gamma$ is a final result of sliding of $\phi$ along $\frac{\partial f}{ \partial x }$, it has the form
$$\gamma(y)  = \phi(y) + c y^{\tan \theta_{H'}}+\text{higher order terms}$$ 
for some non-zero number $c \in\mathbb{C}.$ By definition, the series $\gamma_{\mathbb{R}}$ also has the form 
$$\gamma_{\mathbb{R}}(y) = \phi(y) + g y^{\tan \theta_{H'}}+\text{higher order terms}$$
with $g \in \mathbb{R}$ being generic if $c \not\in \mathbb{R}$ and $g = c$ otherwise.  Applying Lemma~\ref{Lemma22} for both $f$ and $\frac{\partial f}{\partial x}$, we obtain 
$$h_0 \ = \ \mathrm{ord} f(\phi(y),y) \ = \ \mathrm{ord} f(\gamma_{\mathbb{R}}(y),y)$$
and 
$$h_1 \ = \ \mathrm{ord}\  \frac{\partial f}{\partial x}(\phi(y),y) \ \le \ \mathrm{ord}\ \frac{\partial f}{\partial x} (\gamma_{\mathbb{R}}(y),y) \ =: \ h'_1.$$
It follows that
\begin{eqnarray*}
\ell(\phi) \ = \ \min \Big \{\frac{h_0 - 1}{h_0}, \frac{h_1}{h_0}\Big \} & \leq & \min \Big \{\frac{h_0 - 1}{h_0}, \frac{h'_1}{h_0}\Big \} \ = \ \ell(\gamma_{\mathbb{R}}).
\end{eqnarray*}
The claim is proved.
\end{proof}

We now consider the case $\phi$ has the form $(x(t), -t)$. Then
$$\|\nabla \bar{f}(x(t), t)\|  = \|\nabla {f}(x(t), -t)\| \simeq |{f}(x(t), -t)|^{\ell(\phi)} = |\bar{f}(x(t), t)|^{\ell(\phi)}.$$
Applying Claim \ref{Claim4} for $\bar{f}$ we get $\ell(\phi) \leq \mathscr{L}_{+}(\bar{f})=\mathscr{L}_{-}(f).$

Summing up in both cases we have
\begin{eqnarray*}
\ell(\phi)  &\le& \max \left \{\mathscr{L}_{+}(f), \mathscr{L}_{-}(f) \right\}.
\end{eqnarray*}
Since $\phi$ is arbitrary, we get easily from \eqref{Eqn1} and \eqref{Eqn3} that 
\begin{eqnarray*}
\mathscr{L}(f) & = & \max \left \{\mathscr{L}_{+}(f), \mathscr{L}_{-}(f) \right\}.
\end{eqnarray*}
The proof of theorem is completed.
\end{proof}

\begin{example}{\rm
Let $f(x,y) = x^3 + 3xy^3 \in \mathbb{R}\{x, y\}$. Then $f$ is mini-regular in $x$ of order $m =3$ and 
$\frac{\partial f}{ \partial x } = 3(x^2 + y^3).$ By definition, then $\Gamma(f)$ consists one real polar branch $\gamma : x = g y^{3/2} $
for some generic number $g.$ A simple calculation shows that $\ell(\gamma ) = \frac{2}{3}$ and that $\mathscr{L}_{+}(f)  = \frac{2}{3}$. It can be computed similarly that $\mathscr{L}_{-}(f)  = \frac{7}{9}$. Hence, $\mathscr{L}(f) = \frac{7}{9}$ by Theorem~\ref{Theorem45}.
}\end{example}

\begin{remark}{\rm
It is noting that the \L ojasiewicz gradient exponent of real analytic function germs is not topological invariant. Indeed, in some neighbourhood of the origin in $\mathbb{R}^2,$ consider the functions $f(x, y) := x^2 - y^3$ and $g(x, y) := x^2 - y^5$. It is obvious that they are topologically right equivalent. On the other hand, one can easily see that $\mathscr{L}(f) = 2/3 \ne 4/5 = \mathscr{L}(g).$
}\end{remark}

The following result was observed by Haraux \cite[Theorem~2.1]{Haraux2005} in the real case.
\begin{corollary}
Let $f \colon \mathbb{K}^2 \to \mathbb{K}$ be a homogeneous polynomial of degree $d.$ Then
$$\mathscr{L}(f) =  1 - \dfrac{1}{d}.$$
\end{corollary}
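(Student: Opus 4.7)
The lower bound $\mathscr{L}(f) \ge (d-1)/d$ is immediate from inequality \eqref{Eqn3}, since $f$ is a nonzero homogeneous polynomial of degree $d$ and hence $m := \mathrm{ord}\, f = d$. The task is therefore to establish the matching upper bound, and the natural tool is Theorem~\ref{Theorem41} in the complex setting and Theorem~\ref{Theorem45} in the real setting.

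First I would apply a generic linear change of coordinates. Such a change preserves $\mathscr{L}(f)$ (as it is an analytic diffeomorphism and $\mathscr{L}(f)$ is defined via \eqref{Eqn1}), sends homogeneous polynomials of degree $d$ to homogeneous polynomials of degree $d$, and can be chosen so that the result is mini-regular in $x$ of order $d$. The key observation is now that the polar curve $\frac{\partial f}{\partial x} = 0$ is itself defined by a homogeneous polynomial, namely of degree $d-1$, so each of its Newton--Puiseux roots is linear:
$$\gamma(y) = a\, y \qquad \text{for some } a \in \mathbb{C}.$$
For any such $\gamma \in \Gamma(f)$, homogeneity of $f$ gives
$$f(ay, y) = y^{d} f(a,1), \qquad \frac{\partial f}{\partial x}(ay,y) = y^{d-1}\frac{\partial f}{\partial x}(a,1), \qquad \frac{\partial f}{\partial y}(ay,y) = y^{d-1}\frac{\partial f}{\partial y}(a,1).$$
Since $\gamma$ is not a Newton--Puiseux root of $f=0$, we have $f(a,1)\ne 0$, so $\mathrm{ord}\, f(\gamma(y),y) = d$. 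Euler's identity $d\cdot f(a,1) = a\,\frac{\partial f}{\partial x}(a,1) + \frac{\partial f}{\partial y}(a,1)$, together with $\frac{\partial f}{\partial x}(a,1)=0$, forces $\frac{\partial f}{\partial y}(a,1)\ne 0$, so $\|\nabla f(ay,y)\| \simeq |y|^{d-1}$. Therefore $\ell(\gamma) = (d-1)/d$ for every $\gamma \in \Gamma(f)$.

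If $\Gamma(f) = \emptyset$, Theorem~\ref{Theorem41} already gives $\mathscr{L}(f) = (m-1)/m = (d-1)/d$; otherwise, Theorem~\ref{Theorem41} combined with the previous paragraph yields $\mathscr{L}(f) = \max_{\gamma\in\Gamma(f)} \ell(\gamma) = (d-1)/d$. This settles the complex case, and since a real homogeneous $f$ of degree $d$ can be viewed inside the complex framework, the same identities apply to its real polar branches: every $\gamma_{\mathbb{R}} \in \Gamma(f)$ is again of the form $\gamma_{\mathbb{R}}(y) = a'y$ with $a' \in \mathbb{R}$, and the homogeneous/Euler computation yields $\ell(\gamma_{\mathbb{R}}) = (d-1)/d$. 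Applying Theorem~\ref{Theorem45} to both $f$ and $\bar f$ (which is again homogeneous of degree $d$) gives $\mathscr{L}_{+}(f)=\mathscr{L}_{-}(f)=(d-1)/d$, hence $\mathscr{L}(f) = (d-1)/d$.

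The only point that requires a little care---and which I would flag as the mild obstacle---is the verification that $\nabla f(a,1) \ne 0$ for every polar branch $\gamma(y)=ay$ that is not a root of $f=0$; this is where Euler's identity does the work, ruling out the degenerate possibility that $|\nabla f|$ vanishes to higher order than $|y|^{d-1}$ along $\gamma$ and so preventing any exponent larger than $(d-1)/d$ from arising.
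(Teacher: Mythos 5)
Your proposal is correct, and in the complex case it follows essentially the same path as the paper: reduce to mini-regularity by a linear change of coordinates, observe that $\frac{\partial f}{\partial x}$ is homogeneous of degree $d-1$ so polar branches are linear, conclude $\mathrm{ord}\, f(\gamma(y),y)=d$ for $\gamma\in\Gamma(f)$, and invoke Theorem~\ref{Theorem41}. The one small refinement you add is making the computation of $\ell(\gamma)$ explicit via Euler's identity (showing $\frac{\partial f}{\partial y}(a,1)\ne 0$), whereas the paper leans, without comment, on the relation $\ell(\gamma)=1-\tfrac{1}{\mathrm{ord}\,f(\gamma(y),y)}$ established in the proof of Claim~\ref{Claim1} (equivalently Corollary~\ref{Corollary43}). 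Both routes to $\ell(\gamma)=(d-1)/d$ are fine.

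Where you genuinely diverge is the real case. You apply Theorem~\ref{Theorem45} directly, checking that every real polar branch $\gamma_{\mathbb{R}}(y)=a'y$ still satisfies $\ell(\gamma_{\mathbb{R}})=(d-1)/d$; here a small amount of care is needed, since when $a'$ is the generic real replacement for a non-real coefficient one no longer has $\frac{\partial f}{\partial x}(a',1)=0$, but Euler's identity still gives $\nabla f(a',1)\ne 0$ from $f(a',1)\ne 0$, so your conclusion is sound. The paper instead uses complexification: since $\mathscr{L}(f)\le\mathscr{L}(f_{\mathbb{C}})=1-\tfrac{1}{d}$ (real arcs form a subset of complex arcs) and the lower bound \eqref{Eqn3} already gives $1-\tfrac{1}{d}\le\mathscr{L}(f)$, equality follows at once. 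The paper's real-case argument is shorter and sidesteps Theorem~\ref{Theorem45} entirely; yours is more hands-on but requires tracking what happens to complex polar branches under the real-approximation operation. Both are valid, and the paper's complexification trick is worth internalizing as the slicker closing move.
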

\begin{proof}
We first consider the case where $f$ is a complex homogeneous polynomial of degree $d.$  Since an invertible linear transformation does not change homogeneity of  $f,$ we may assume that $f$ is mini-regular in $x.$ We have $\frac{\partial f}{\partial x}$ is homogeneous polynomial of degree $d - 1.$ Hence, each root $x = \gamma(y)$ of $\frac{\partial f}{\partial x} = 0$ has the form $x = ay$ for some $a \in \mathbb{C}.$ Clearly, if $f(ay, y) \not \equiv 0,$ then $f(ay, y) = b y^d$ for some $b \ne 0,$ and so $\mathrm{ord} f(\gamma(y), y) = d = \mathrm{ord} f.$ Therefore, by Theorems~\ref{Theorem41},
$\mathscr{L}(f) =  1 - \frac{1}{d}.$

We now assume that $f$ is a real homogeneous polynomial of degree $d$ and consider its complexification $f_{\mathbb{C}}$. By definition, we have
\begin{eqnarray*}
\mathscr{L}(f) & \le & \mathscr{L}(f_{\mathbb{C}}) \ = \ 1 - \frac{1}{d}.
\end{eqnarray*}
On the other hand, the inequality $1 - \frac{1}{d} \le \mathscr{L}(f)$ holds. Therefore,
$\mathscr{L}(f) =  1 - \frac{1}{d}.$
\end{proof}

\subsection{Effective estimates for \L ojasiewicz exponents} \

In this subsection we give bounds for \L ojasiewicz exponents of polynomial functions in two variables. 
The bounds depend only on the degree of the polynomial and are simple to state.

\begin{theorem}[see also {\cite[Main Theorem]{Acunto2005}}] \label{Theorem49}
Let $f \colon \mathbb{K}^2 \to \mathbb{K}$ be a polynomial of degree $d$ with $f(0) = 0.$ Then
$$\mathscr{L}(f) \le 1 - \frac{1}{(d-1)^2 + 1}. $$
\end{theorem}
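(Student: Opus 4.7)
The plan is to reduce the estimate, via Corollary~\ref{Corollary43}, to showing $\mathrm{ord}_y f(\gamma(y), y) \le (d-1)^2 + 1$ for every polar branch $\gamma \in \Gamma(f)$. For real $f$, the bound descends from the complex case via $\mathscr{L}(f) \le \mathscr{L}(f_{\mathbb{C}})$, since any complex gradient inequality on a neighborhood of $0 \in \mathbb{C}^2$ restricts to the same real inequality on $\mathbb{R}^2$. I therefore work entirely over $\mathbb{C}$, and after a generic linear change of coordinates (which preserves both $\mathscr{L}(f)$ and $d$) assume $f$ is mini-regular in $x$ of some order $m \le d$. If $\Gamma(f) = \emptyset$, Theorem~\ref{Theorem41} yields $\mathscr{L}(f) = (m-1)/m \le 1 - 1/d \le 1 - 1/((d-1)^2+1)$, so I focus on the case $\Gamma(f) \ne \emptyset$.

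The central identity is the chain rule: for $\gamma \in \Gamma(f)$, since $\frac{\partial f}{\partial x}(\gamma(y), y) \equiv 0$, one has
$$\frac{d}{dy}f(\gamma(y),y) \ = \ \gamma'(y)\cdot \frac{\partial f}{\partial x}(\gamma(y),y) + \frac{\partial f}{\partial y}(\gamma(y),y) \ = \ \frac{\partial f}{\partial y}(\gamma(y),y),$$
and hence $\mathrm{ord}_y f(\gamma(y), y) = \mathrm{ord}_y \frac{\partial f}{\partial y}(\gamma(y), y) + 1$. The task therefore becomes to show $\mathrm{ord}_y \frac{\partial f}{\partial y}(\gamma(y), y) \le (d-1)^2$.

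For this I would select an irreducible polynomial factor $H \in \mathbb{C}[x, y]$ of $\frac{\partial f}{\partial x}$ for which $\gamma$ is a Newton--Puiseux root; note $\deg H \le d - 1$. The key claim is that $H$ and $\frac{\partial f}{\partial y}$ share no common polynomial factor. If they did, $H$ would divide both partial derivatives, so $\nabla f$ would vanish identically along the irreducible complex curve $V(H) \subset \mathbb{C}^2$; thus $df = 0$ on its (dense) smooth locus, forcing $f$ locally constant there, and by connectedness of $V(H)$ in the Euclidean topology (which holds because $V(H)$ is irreducible over $\mathbb{C}$), $f$ would be globally constant on $V(H)$. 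Since $H(0,0)=0$ and $f(0,0)=0$, this constant is $0$, so $H \mid f$ by the Nullstellensatz, making $\gamma$ a Newton--Puiseux root of $f$ and contradicting $\gamma \in \Gamma(f)$.

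With coprimality in hand, B\'ezout's theorem gives $I_0(H, \frac{\partial f}{\partial y}) \le \deg H \cdot \deg \frac{\partial f}{\partial y} \le (d-1)^2$. Decomposing $H$ into its irreducible germs in $\mathbb{C}\{x,y\}$ and denoting by $H_\gamma$ the germ corresponding to the Galois orbit of $\gamma$, with Puiseux denominator $N_\gamma$, the standard computation via the parametrization $t \mapsto (\gamma(t^{N_\gamma}), t^{N_\gamma})$ gives $I_0(H_\gamma, \frac{\partial f}{\partial y}) = N_\gamma \cdot \mathrm{ord}_y \frac{\partial f}{\partial y}(\gamma(y), y)$, which is dominated by $I_0(H, \frac{\partial f}{\partial y})$. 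Therefore $\mathrm{ord}_y \frac{\partial f}{\partial y}(\gamma(y), y) \le (d-1)^2 / N_\gamma \le (d-1)^2$, which is the bound sought. I expect the coprimality claim to be the main subtlety: it combines an algebraic input (irreducibility of $H$ and B\'ezout) with a complex-analytic one (connectedness of the irreducible curve $V(H)$), and both are needed to rule out the possibility that $\nabla f$ vanishes on $V(H)$ without $f$ itself vanishing there.
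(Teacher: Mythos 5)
Your proof is correct and follows essentially the same route as the paper's: reduce to $\mathbb{K} = \mathbb{C}$, invoke Theorem~\ref{Theorem41}, use the chain-rule identity $\mathrm{ord}\, f(\gamma(y),y) = \mathrm{ord}\, \tfrac{\partial f}{\partial y}(\gamma(y),y) + 1$ along the optimal polar branch $\gamma$, pick the irreducible polynomial factor $H$ of $\tfrac{\partial f}{\partial x}$ through $\gamma$, and finish with B\'ezout together with the parametrization formula for intersection multiplicity. The only place where you take a slightly different path is in establishing coprimality of $H$ with $\tfrac{\partial f}{\partial y}$: you argue globally that if $H$ divided both partials, then $\nabla f$ would vanish on $V(H)$, so $f$ would be constant and hence identically zero on the connected irreducible curve $V(H)$, forcing $H \mid f$ and contradicting $\gamma \in \Gamma(f)$. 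The paper instead observes more locally that $\tfrac{\partial f}{\partial y}(\gamma(y),y) \not\equiv 0$ (by the very chain-rule identity you used, since $f(\gamma(y),y) \not\equiv 0$), so $i(\tfrac{\partial f}{\partial y}, g)$ is finite and hence $h$ cannot divide $\tfrac{\partial f}{\partial y}$. Your version is valid but longer, invoking connectedness of irreducible complex affine curves and the Nullstellensatz; the paper's version gets coprimality for free from the same orders-of-vanishing computation it already needs, which is tighter. Either way, the B\'ezout step, the factor $N_\gamma$ from the parametrization, and the final inequality $\mathrm{ord}\, f(\gamma(y),y) \le (d-1)^2 + 1$ all line up with the paper.
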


Before proving the corollary we recall the notion of intersection multiplicity of two plane curve germs (see, for example, \cite{Greuel2006}). Let $f \in \mathbb{C}\{x, y\}$ be irreducible. Then the {\em intersection multiplicity} of any $g \in \mathbb{C}\{x, y\}$ with $f$ is given by
$$i(f, g) := \mathrm{ord}\, g(x(t),y(t)),$$
where $t \mapsto (x(t), y(t))$ is a parametrization for the curve germ defined by $f.$ Here by a {\em parametrization} of the curve germ $f = 0,$ we mean an analytic map germ
$$\phi \colon (\mathbb{C}, 0) \rightarrow (\mathbb{C}^2, 0), \quad t \mapsto (x(t), y(t)),$$
with $f \circ \phi \equiv 0$ and satisfying the following {\em universal factorization property}:  each analytic map germ $\psi 
\colon (\mathbb{C}, 0) \rightarrow (\mathbb{C}^2, 0)$ with $f \circ \psi \equiv 0,$ there exists a unique analytic map germ 
$\psi' \colon (\mathbb{C}, 0) \rightarrow (\mathbb{C}, 0)$  such that $\psi = \phi \circ \psi'.$ In general, let $f \in \mathbb{C}\{x, y\}$ a convergent power series and let $f = f_1^{\alpha_1} \cdots f_r^{\alpha_r}$  be a factorization of $f$ in the ring $\mathbb{C}\{x, y\}$ with $f_i$ being irreducible and pairwise co-prime.
Then the intersection multiplicity of $g$ with $f$ is defined to be the sum
$$i(f_1^{\alpha_1} \cdots f_r^{\alpha_r}, g) := \alpha_1 i(f_1, g) + \cdots + \alpha_r i(f_r, g).$$

\begin{proof}[Proof of Theorem~\ref{Theorem49}]
By definition, if $f$ is a real polynomial, then $\mathscr{L}(f) \le \mathscr{L}(f_{\mathbb{C}}),$ where $f_{\mathbb{C}}$ is the complexification of $f.$ 
Hence, it suffices to consider the complex case.

Without loss of generality we may assume that $f$ is mini-regular in $x$ of order $m \le d.$
It follows from Theorem \ref{Theorem41} that, if $\Gamma(f) = \emptyset$ then
$$\mathscr{L}(f) = 1-\frac{1}{m}\leq 1-\frac{1}{(d-1)^2+1}.$$ 
We now assume that $\Gamma(f) \ne \emptyset.$ Take a polar branch $\gamma$ in $\Gamma(f)$, along which the \L ojasiewicz gradient exponent is attained:
$$\mathscr{L}(f) = \ell(\gamma) = 1 - \frac{1}{\mathrm{ord} \ f(\gamma(y),y)}.$$
Let $g$ be the irreducible factor of $\frac{\partial f}{ \partial x }$ in $\mathbb{C}\{x,y\}$ having $\gamma$ as a Newton--Puiseux root. Then $t \mapsto (\gamma(t^{N}), t^{N})$ is a parametrization of the curve germ $g = 0,$ where $N$ denotes the order of $g.$ Note that $i(\frac{\partial f}{\partial y}, g)$ is finite because $f \circ \gamma \not \equiv 0.$
We have
\begin{eqnarray*}
i \left (\frac{\partial f}{\partial y}, g \right) \ = \ \mathrm{ord}\, \frac{\partial f}{\partial y}(\gamma(t^{N}),t^{N}) &=& N \cdot\mathrm{ord}\, \frac{\partial f}{\partial y}(\gamma(y), y) \\
&\ge& \mathrm{ord} \, \frac{\partial f}{\partial y}(\gamma(y),y) = \mathrm{ord}\, f(\gamma(y),y)-1.
\end{eqnarray*}
Let $h \in \mathbb{cC}[x, y]$ be the irreducible component of the polynomial $\frac{\partial f}{ \partial x }$ which is, in $\mathbb{C}\{x,y\}$, divisible by $g.$ Note that $h$ does not divide $\frac{\partial f}{\partial y},$ since $i(\frac{\partial f}{\partial y}, g)$ is finite. It follows from Bezout's theorem (\cite{Brieskorn1986}, p.232) that 
$$i\left (\frac{\partial f}{\partial y}, h \right) \leq (d - 1) \cdot \deg h \leq (d-1)^2.$$
Since $i(\frac{\partial f}{\partial y}, g) \le i(\frac{\partial f}{\partial y}, h),$ therefore
$$\mathscr{L}(f)=1 - \frac{1}{\mathrm{ord} \ f(\gamma(y), y)}  \le  1 - \frac{1}{i(\frac{\partial f}{\partial y}, g) + 1}\leq 1 - \frac{1}{i(\frac{\partial f}{\partial y}, h) + 1} \leq  1-\frac{1}{(d - 1)^2 + 1}.$$
The corollary is proved.
\end{proof}

Let $f \colon \mathbb{K}^2 \to \mathbb{K}$ be a polynomial function of degree $d$ with $f(0) = 0.$ Set
\begin{eqnarray*}
\widetilde{\mathscr{L}}(f) &:=& \inf\{\ell \ | \ \exists c > 0, \exists \epsilon >0, |f(x, y)| \ge c\, \mathrm{dist}((x, y), f^{-1}(0))^\ell, \forall \|(x, y)\| < \epsilon\},
\end{eqnarray*}
where $\mathrm{dist}((x, y), f^{-1}(0))$ denotes the distance from $(x, y)$ to the set $f^{-1}(0)$ (see \cite{Lojasiewicz1959, Lojasiewicz1965}). It is well-known (see \cite{Bochnak1975, Kuo1974}) that the \L ojasiewicz exponent $\widetilde{\mathscr{L}}(f)$ is a rational number and it is attained along an analytic curve.

In case $\mathbb{K} = \mathbb{C},$ Risler and Trotman showed in \cite[Theorem~1]{Risler1997} that
\begin{eqnarray*}
\widetilde{\mathscr{L}}(f) &=& \mathrm{ord}\, f \ \le \ d.
\end{eqnarray*}
In case $\mathbb{K} = \mathbb{R},$ a formula for computing $\widetilde{\mathscr{L}}(f)$ was given by Kuo in \cite{Kuo1974}. 
Furthermore, we have the following result (see also \cite{Acunto2005, Gwozdziewicz1999, Johnson2011, Kollar1999, Kurdyka2014, Pham2012}).

\begin{theorem}\label{Theorem410}
Let $f \colon \mathbb{R}^2 \to \mathbb{R}$ be a real polynomial of degree $d$ with $f(0) = 0.$ Then
$$\widetilde{\mathscr{L}}(f) \le \frac{1}{1 - \mathscr{L}(f)}.$$
In particular, we have
$$\widetilde{\mathscr{L}}(f) \le (d - 1)^2 + 1.$$
\end{theorem}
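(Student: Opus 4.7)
My plan is to derive the first inequality from the gradient \L ojasiewicz inequality by the classical ``length-of-the-trajectory'' argument, and then combine with Theorem~\ref{Theorem49} to obtain the effective numerical bound.

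Set $\ell := \mathscr{L}(f) \in [0, 1)$. By definition of $\mathscr{L}(f)$, there exist constants $c_1 > 0$ and $\epsilon_1 > 0$ such that $\|\nabla f(z)\| \ge c_1 |f(z)|^\ell$ for all $\|z\| < \epsilon_1$. On the open set $U := \{z : \|z\| < \epsilon_1, \ f(z) \ne 0\}$, introduce the auxiliary function
$$u(z) \ := \ \frac{|f(z)|^{1 - \ell}}{1 - \ell}.$$
A direct computation gives $\nabla u = |f|^{-\ell}\, \mathrm{sign}(f)\, \nabla f$, so $\|\nabla u(z)\| \ge c_1$ for every $z \in U$. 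Thus $u$ is a smooth function on $U$ whose gradient is bounded below by a positive constant.

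Next, fix a point $z_0$ close to the origin with $f(z_0) \ne 0$, and consider the unit-speed integral curve $\gamma \colon [0, T) \to U$ of the vector field $-\mathrm{sign}(f(z_0))\, \nabla u / \|\nabla u\|$ starting at $z_0$. Along $\gamma$,
$$\frac{d}{dt}\, u(\gamma(t)) \ = \ -\mathrm{sign}(f(z_0))\, \|\nabla u(\gamma(t))\| \ \le \ -c_1,$$
so $u(\gamma(t))$ drops from $u(z_0)$ to $0$ within time at most $T_0 := u(z_0)/c_1 = |f(z_0)|^{1-\ell}/(c_1(1-\ell))$. Since $\gamma$ has unit speed, its length up to that time is at most $T_0$. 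Provided $z_0$ lies in a small enough neighbourhood of the origin, $|f(z_0)|$ is small, $T_0 < \epsilon_1/2$, and $\gamma$ remains inside the ball $\|z\| < \epsilon_1$ until it reaches the zero locus $f^{-1}(0)$. Consequently
$$\mathrm{dist}(z_0, f^{-1}(0)) \ \le \ T_0 \ = \ \frac{|f(z_0)|^{1-\ell}}{c_1(1-\ell)},$$
which rearranges to $|f(z_0)| \ge c_2\, \mathrm{dist}(z_0, f^{-1}(0))^{1/(1-\ell)}$ for some positive constant $c_2$, and so $\widetilde{\mathscr{L}}(f) \le \frac{1}{1 - \mathscr{L}(f)}$. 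The ``in particular'' clause is then immediate: by Theorem~\ref{Theorem49}, $1 - \mathscr{L}(f) \ge \frac{1}{(d - 1)^2 + 1}$, hence $\widetilde{\mathscr{L}}(f) \le (d - 1)^2 + 1$.

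The main technical point to check is that the integral curve $\gamma$ actually reaches $f^{-1}(0)$ without leaving the ball $\|z\| < \epsilon_1$ where the gradient \L ojasiewicz inequality is available; this is the standard obstacle in such arguments, and it is handled by observing that $T_0 \to 0$ as $z_0 \to 0$, so for $z_0$ in a sufficiently small neighbourhood of the origin the trajectory of length $\le T_0$ cannot escape the ball of radius $\epsilon_1$.
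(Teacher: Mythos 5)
Your proof is correct in substance, and it is the standard argument. The length-of-trajectory technique (the Łojasiewicz ``talweg'' argument: desingularize the flow by replacing $f$ with the auxiliary function $u = |f|^{1-\ell}/(1-\ell)$, bound the arclength of the descent trajectory, and use that arclength to control the distance to the zero set) is exactly how the classical inequality is derived from the gradient inequality in the references the paper cites. The paper itself does not spell out this step at all: it simply invokes the proof of Theorem~2.2 in \cite{Pham2012} (see also \cite{Kurdyka2014}) for the first inequality and Theorem~\ref{Theorem49} for the second. So you have supplied a self-contained version of what the paper treats as a citation; the route is the same, just written out.

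There is one slip to fix. You integrate the vector field $-\mathrm{sign}(f(z_0))\,\nabla u/\|\nabla u\|$ and assert $\tfrac{d}{dt}u(\gamma(t)) = -\mathrm{sign}(f(z_0))\,\|\nabla u(\gamma(t))\| \le -c_1$. Since $u = |f|^{1-\ell}/(1-\ell)\ge 0$, the descent direction for $u$ is simply $-\nabla u/\|\nabla u\|$; with the extra factor as written, the derivative is $+\|\nabla u\|>0$ whenever $f(z_0)<0$, so the trajectory would climb away from the zero set. The formula you probably intended is $-\mathrm{sign}(f(z_0))\,\nabla f/\|\nabla f\|$, which agrees with $-\nabla u/\|\nabla u\|$ on the region where $f$ keeps the sign of $f(z_0)$, and gives $\tfrac{d}{dt}u(\gamma(t)) = -\|\nabla u(\gamma(t))\|\le -c_1$ as desired. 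With that correction, the rest of the argument, including the observation that $T_0\to 0$ as $z_0\to 0$ so the trajectory cannot leave the ball on which the gradient inequality holds, is sound and yields $\widetilde{\mathscr{L}}(f)\le 1/(1-\mathscr{L}(f))$; combining with Theorem~\ref{Theorem49} then gives the effective bound.
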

\begin{proof}
The first inequality is an immediate consequence of the proof of Theorem~2.2 in \cite{Pham2012} (see also \cite{Kurdyka2014}). The second one can be deduced from Theorem~\ref{Theorem49}.
\end{proof}

\begin{remark}{\rm
In view of \cite[Example~1]{Kollar1999} (see also \cite{Gwozdziewicz1999, Johnson2011}), the estimate $\widetilde{\mathscr{L}}(f) \le (d - 1)^2 + 1$ is close to being optimal.
}\end{remark}

\subsection*{Acknowledgment.} 
A part of this work was done while the first author and the second author were visiting at Vietnam Institute for Advanced Study in Mathematics (VIASM) in the spring of 2016. These authors would like to thank the Institute for hospitality and support.

\bibliographystyle{abbrv}
\bibliography{ZDSB}

\end{document}